\newtheorem{theorem}{Theorem}[section]
\newtheorem{corollary}[theorem]{Corollary}
\newtheorem{definition}[theorem]{Definition}
\newtheorem{lemma}[theorem]{Lemma}
\newtheorem{proposition}[theorem]{Proposition}
\theoremstyle{remark}
\newtheorem{remark}[theorem]{Remark}
\newtheorem{example}[theorem]{Example}
\newcommand{\vanish}[1]{}\parskip=12pt
\begin{document}
\title{A Gray Code for the Shelling Types of the Boundary of a Hypercube} 
\author{Sarah Birdsong, G\'abor Hetyei}
\address{Department of Mathematics and Statistics, UNC Charlotte, 
	Charlotte, NC 28223}
\email{ghetyei@uncc.edu, sjbirdso@uncc.edu}
\subjclass[2000]{Primary 05A05; Secondary 52B05, 52B22}
\keywords{Gray Code, Connected Permutations}   
\date{\today}
\begin{abstract}
We consider two shellings of the boundary of the hypercube equivalent if one can be transformed into the other by an isometry of the cube.  We observe that a class of indecomposable permutations, bijectively equivalent to standard double occurrence words, may be used to encode one representative from each equivalence class of the shellings of the boundary of the hypercube. These permutations thus encode the shelling types of the boundary of the hypercube. We construct an adjacent transposition Gray code for this class of permutations. Our result is a signed variant of King's result showing that there is a transposition Gray code for indecomposable permutations. 
\end{abstract}
\maketitle


\section*{Introduction}

There is a significant amount of research devoted to finding Gray
codes for classes of permutations, where two permutations are considered
adjacent if they differ by an involution of some special kind. For a
survey of some key results we refer the reader to Savage's
paper~\cite[Section 11]{Savage}. The simplest and most elegant result
in this area is the Johnson-Trotter algorithm~\cite{Johnson, Trotter},
providing an adjacent transposition Gray code for all permutations of a
finite set. 

The present work is motivated by King's recent paper~\cite{King}, providing a
transposition Gray code for the set of all indecomposable permutations
of a finite set. The combinatorial interest in these permutations is
long-standing, both Comtet~\cite[p.\ 261]{Comtet-AC} and
Stanley~\cite[Ch.\ 1, Exercise 32]{Stanley-EC1} discuss them in their 
textbooks. As shown by Ossona de Mendez and
Rosenstiehl~\cite{OM-Rosenstiehl-conn}, they are bijectively equivalent to
rooted hypermaps. Refining Dixon's famous result~\cite{Dixon}, stating
that a random pair of permutations generates almost always a transitive
group is also related to enumerating indecomposable permutations, see
Cori's work~\cite{Cori-indec}.  

Our main result is finding an adjacent transposition Gray code for a
signed variant of the indecomposable permutations, which we call {\em
  sign-connected permutations of the set $\{\pm 1,\pm 2,\ldots,
  \pm n\}$}. The importance of this signed variant is highlighted by the
fact that our sign-connected permutations encode the shelling types of
the boundary of the hypercube. Here we consider two shellings to be of
the same type, if they may be transformed into each other by an isometry
of the underlying hypercube. Adjacent transpositions correspond to
swapping adjacent entries in a list of facets.

We achieve our goal in two steps. First, in Section~\ref{sec:gray}, we provide a
very simple adjacent transposition Gray code for a larger class of 
permutations, which we call {\em standard permutations of the set
  $\{\pm 1,\pm 2,\ldots, \pm n\}$}. The simplicity of this code is
comparable to the simplicity of the output of the Johnson-Trotter
algorithm. The second step, contained in Section~\ref{sec:conn},  is to
show the following, surprisingly simple statement: an adjacent
transposition Gray code for the class of sign-connected permutations of
$\{\pm 1,\pm 2,\ldots, \pm n\}$ may be obtained from the Gray code of
all standard permutations by simply deleting those standard permutations  
of $\{\pm 1,\pm 2,\ldots, \pm n\}$ which are not
  sign-connected. Although the statement is very simple, its proof
  requires a careful look at a special way of coding standard
  permutations of $\{\pm 1,\pm 2,\ldots, \pm n\}$. This coding is
    inspired by the place-based inversion tables, introduced for
    ordinary permutations in~\cite{Hetyei}. Our result justifies the
    hope that the 
    same coding applied to ordinary permutations may help us find an
    adjacent transposition Gray code for indecomposable permutations, 
if such a Gray code exists.  

Our standard permutations are bijectively equivalent to
{\em standard double-occurrence words} with $n$ symbols. As shown by
Ossona de Mendez and Rosenstiehl~\cite{OM-Rosenstiehl}, these words are
bijectively equivalent to rooted maps with $n-1$ edges, this result was
later refined by Drake~\cite{Drake}. Standard double-occurrence words are
also bijectively equivalent to fixed point free involutions, as defined by
Cori~\cite{Cori}. Under these bijections, sign-connected standard
permutations correspond to connected rooted maps and indecomposable
fixed-point free involutions. Our Gray codes may be useful in the study
of these related objects.  

Our paper is structured as follows. In the Preliminaries we gather a few 
basic facts on permutation Gray codes and shellings of the boundary of a
hypercube that we will need. We also define sign-connected permutations
of $\{\pm 1,\pm 2,\ldots, \pm n\}$. In Section~\ref{sec:hyp} we
introduce standard permutations of $\{\pm 1,\pm 2,\ldots, \pm n\}$ and show
that the sign-connected standard permutations are bijectively equivalent
to the shelling types of the boundary of an $n$-dimensional
hypercube. The fact that standard sign-connected permutations correspond
to standard double-occurrence words is shown in Section~\ref{sec:arc}
where we also introduce arc diagrams that help visualize standard
permutations of $\{\pm 1,\pm 2,\ldots, \pm n\}$. Essentially the same
visualization was used by 
Drake~\cite{Drake}. Inspired by the place-based noninversion tables 
introduced in~\cite{Hetyei}, we introduce a new encoding of our arc
diagrams. It is this encoding that makes the definition of the simple
Gray code for all standard permutations in Section~\ref{sec:gray}
truly easy. Essential properties of this Gray
code are explored in Section~\ref{sec:prop}. Finally, our main result
may be found in Section~\ref{sec:conn}.

Our result highlights a connection between the study of
hypermaps and the theory of shellings which may be worth exploring
further in the future. It also raises the hope that, by using a similar
encoding to the one introduced in~\cite{Hetyei}, one could find an
adjacent transposition Gray code for indecomposable
permutations. Finally, although a large amount of literature exists on shelling
and shellability, very little has been done to explore the set of all
shellings of the same object. Our paper is among the first in this
new direction.

\section{Preliminaries}

\subsection{Gray codes for sets of permutations}
Given a finite set $X$ with $n$ elements, we define a {\em permutation} of $X$ as a word $\pi=\pi(1)\cdots \pi(n)$ in which every element of $X$ appears exactly once. We denote the set of all permutations of $X$ by ${\mathcal
  S}_X$. Given any subset ${\mathcal V}$ of ${\mathcal S}_X$, we consider the following graph on the vertex set ${\mathcal V}$: the unordered pair $\{\pi,\rho\}\subseteq {\mathcal V}$ is an edge if $\pi$ and $\rho$
differ by an {\em adjacent transposition}, i.e., there is an $i\in \{1,\ldots,n-1\}$ such that $\rho(i+1)=\pi(i)$, $\rho(i)=\pi(i+1)$, and $\rho(j)=\pi(j)$ for $j\in\{1,\ldots,n\}\setminus \{i,i+1\}$. An {\em adjacent transposition Gray code for ${\mathcal V}$} is a Hamiltonian path in the resulting graph. Obviously not all sets of permutations have
such a Gray code.  

The Johnson-Trotter algorithm generates an adjacent transposition Gray
code on the set ${\mathcal S}_X$ of all permutations of the set $X$.  When $X=\{1,2,\dots,n\}$, this algorithm recursively defines the Gray code as follows: set the initial permutation as $12...n$; replace each permutation in the Gray code for permutations of length $n-1$ with $n$ new permutations of length $n$; for each of the length $n-1$ permutations, insert $n$ into every position $1$ through $n$ to get the new permutations of length $n$.  For odd-indexed, length $n-1$ permutations, insert $n$ from right to left, and left to right for even-indexed permutations~\cite{Johnson, Trotter}.  See Table~\ref{tab:JT} for the Gray code when $n=2$ and $n=3$. 

\begin{table}[h]
\caption{The Gray code produced by the Johnson-Trotter algorithm for $n=2$ and $n=3$ (read down)}
\label{tab:JT}
\begin{tabular}{cccc}
\hline
$n=2$ &      &  $n=3$ &     \\
\hline
12  &       & 123 & 321  \\
21  &       & 132 & 231  \\
      &       & 312 & 213  \\
\hline
\end{tabular}
\end{table}

A permutation $\pi$ of ${\mathcal S}_n$ is {\em connected} if there is no $m < n$ such that $\pi$ sends ${\{1,2,\ldots ,m\}}$ into ${\{1,2,\ldots ,m\}}$.  King found a {\em transposition Gray code} for such connected permutations~\cite{King}, i.e., a Gray code for the graph whose vertices are connected permutations and whose edges connect permutations that differ by a (not necessarily adjacent) transposition. It is still open, whether there is an adjacent transposition Gray code for connected permutations. 

In this paper, we be interested in the following signed variant of connected permutations.

\begin{definition}
\label{def:sign-conn}
We call a permutation $\pi=\pi(1) \cdots \pi(2n)$ of $\{\pm 1, \pm 2, \ldots , \pm n\}$ {\em sign-connected} if and only if for all $1\leq m<2n$ there is at least one $j\in \{1,2,\ldots,n\}$ such that $|\{\pi(1),\ldots,\pi(m)\}\cap
\{-j,j\}|=1$. If a permutation of $\{\pm 1, \pm 2, \ldots , \pm n\}$ is not sign-connected then we call it {\em sign-disconnected}.
\end{definition}

As we will see in Section~\ref{sec:hyp}, a subset of these permutations is identifiable with the shelling types of the boundary of a hypercube. We will show in Section~\ref{sec:conn} that there is an adjacent transposition Gray code for this subset.    

\subsection{Shellings of the boundary of the hypercube}
\label{sec:shell}

We define the standard $n$-dimensional hypercube ($n$-cube) to be $[-1,1]^n \subset \mathbb{R}^n$.  As observed by Metropolis and Rota~\cite{Metropolis-Rota}, each non-empty face of the standard hypercube may be denoted by a vector $(u_1,\ldots,u_n)\in \{-1,*,1\}^n$, where $u_i=-1$ or $u_i=1$ indicates that all points in the face have the $i^{th}$ coordinate equal to $u_i$; whereas, $u_i=*$ indicates the $i^{th}$ coordinate of the points in the face range over the entire set $[-1,1]$. Using this notation, the code for each facet of the boundary is of the form $(u_1,\ldots,u_n)\in \{-1,*,1\}^n$ such that exactly one $u_i$ belongs to $\{-1,1\}$. Thus, we can encode each facet by a single number $-k$ or $k$ where $k\in\{1,\ldots,n\}$ is the unique index $k$ such that $u_k=\pm 1$.  We write $-k$ or $k$ when $u_k=-1$ or $u_k=1$, respectively. Using this notation, we will identify each enumeration of the facets of the boundary of the $n$-cube with a permutation of the set $\{\pm 1, \pm2, \ldots, \pm n\}$. This correspondence is a bijection.  

A {\em shelling} is a particular way of listing the facets of the boundary of a polytope, see \cite[Definition 8.1]{Ziegler} for a definition.  The fact that the boundary complex of any polytope is shellable was shown by Bruggeser and Mani~\cite{Bruggeser-Mani}. A polytope is {\em cubical} if each of its proper faces is combinatorially equivalent to a cube. The boundary complex of a cubical polytope is a cubical complex; see Chan's paper~\cite{Chan} for a definition of a cubical complex as the geometric realization of a cubical poset. The boundary complex of a cubical polytope is {\em pure}, i.e., all facets have the same dimension. Following Chan~\cite{Chan}, we define a shelling of a $(d-1)$-dimensional cubical complex as an enumeration $(F_1,\ldots,F_r)$ of its facets in such a way that for all $m>1$ the intersection $F_m\cap (F_1\cup\cdots\cup F_{m-1})$ is a union of $(d-2)$-faces homeomorphic to the ball or sphere. Such an intersection is a {\em cubical shelling component}, whose {\em type} is $(i,j)$ if it is the union of $i$ antipodally unpaired $(d-2)$-faces and $j$ pairs of antipodal $(d-2)$-faces. The ``only if part'' of the following statement is obvious and used in Chan's work~\cite{Chan} without proof.  Both implications are shown by Ehrenborg and Hetyei~\cite[Lemma 3.3]{Ehrenborg-Hetyei}. 

\begin{lemma}[Ehrenborg-Hetyei]
\label{l:EH}
The ordered pair $(i,j)$ is the type of a shelling component in a shelling of a cubical $(d-1)$-complex if and only if one of the following holds:
\begin{itemize}
\item[(i)] $i=0$ and $j=d-1$; or 
\item[(ii)] $0<i<d-1$ and $0\leq j\leq d-1-i$.
\end{itemize}
Furthermore, in case (i), the shelling component is homeomorphic to a $(d-2)$-sphere, and in case (ii), the shelling component is homeomorphic to a $(d-2)$-ball. 
\end{lemma}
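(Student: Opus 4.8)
The plan is to analyze a shelling $(F_1,\ldots,F_r)$ of a cubical $(d-1)$-complex $\Delta$ combinatorially, by tracking which $(d-2)$-faces of the facet $F_m$ lie in the previously shelled part. Fix $m>1$ and set $G = F_m\cap(F_1\cup\cdots\cup F_{m-1})$. Since $F_m$ is combinatorially a $(d-1)$-cube, its $(d-2)$-faces come in $d-1$ antipodal pairs, and $G$ is a subcomplex of $\partial F_m$ that is a union of some collection of these $(d-2)$-faces; say $G$ consists of $j$ complete antipodal pairs together with $i$ faces that are ``unpaired'' in $G$ (their antipodes are missing). So the type $(i,j)$ is well defined and satisfies $0\le i+2j$ with $i+j\le d-1$, equivalently $0\le j\le d-1-i$ and $0\le i\le d-1$. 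The two things left to prove are: (a) the homeomorphism claims, which then force the exclusion of the cases not listed; and (b) that every listed pair $(i,j)$ is actually realized.

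For the homeomorphism statement I would argue inside $\partial F_m\cong S^{d-2}$. Identify $F_m$ with the cube $[-1,1]^{d-1}$; its $(d-2)$-faces are the $2(d-1)$ facets $x_k=\pm1$, and antipodal pairs are $\{x_k=1,\,x_k=-1\}$. A union of such facets is determined by a set $S$ of ``present'' signed coordinates. If for some $k$ both signs are present that coordinate is ``free''; the key topological fact is that the union of facets indexed by $S$ deformation retracts onto the corresponding subcomplex of the boundary of the lower-dimensional cube obtained by collapsing the free coordinates. Concretely: if $i=0$ and $j=d-1$ then $G=\partial F_m$, a $(d-2)$-sphere — this is case (i). If $i\ge 1$, then $G$ is a proper union of facets of the cube that misses at least one facet and its antipode is irrelevant; a standard retraction (push away from the barycenter of a missing facet, or use that a proper subcomplex of $\partial[-1,1]^{d-1}$ consisting of whole facets is contractible onto a point when it is "star-shaped", and more generally homeomorphic to a ball) shows $G$ is a $(d-2)$-ball. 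The cleanest route is: a nonempty union of facets of a $d'$-cube that is not the whole boundary is homeomorphic to a $(d'-1)$-ball — this can be proved by induction on the number of facets, peeling off one facet at a time and checking the intersection with the rest is a ball of one lower dimension, using that each facet meets the union of the others in a union of facets of a cube. I would state and prove this as a short lemma. Together with Chan's definition (which only allows $G$ to be a ball or a sphere), this rules out any type with $i\ge 1$ and simultaneously forbids $i=0,\ j<d-1$ (that union is neither a ball nor a sphere — it is a proper subcomplex of $S^{d-2}$ consisting of whole facets that is disconnected or has nontrivial topology once $j<d-1$; more simply, if $i=0$ then every present coordinate is free, so $G$ is a union of whole facets with no boundary, hence if it is to be a manifold with boundary it must be all of $\partial F_m$), leaving exactly cases (i) and (ii).

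For realizability (the "if" direction) I would construct explicit shellings of the cube boundary itself, i.e. take $\Delta = \partial([-1,1]^d)$, which is a cubical $(d-1)$-complex, and exhibit, for each admissible $(i,j)$, a shelling order in which some shelling step has type $(i,j)$. One builds the list of $2d$ facets greedily: start from one facet, then at step $m$ add a facet $F_m$ whose intersection with $\{F_1,\dots,F_{m-1}\}$ realizes the desired pattern of present/free coordinates; the line-shelling of Bruggeser–Mani, or a direct "revolving-door" order on the signed coordinates, gives enough flexibility to hit any prescribed $(i,j)$ with $0<i<d-1$, $0\le j\le d-1-i$, and the very last facet added in the standard line shelling realizes $(0,d-1)$. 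Since a cube boundary is itself a legitimate cubical $(d-1)$-complex, this suffices.

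The main obstacle I expect is the topological lemma in the middle paragraph — proving cleanly, and at the right level of generality, that a nonempty proper union of whole facets of a cube is a ball while the full boundary is a sphere, and that no other union of whole facets (the $i=0$, $j<d-1$ case) is a manifold-with-boundary. Everything else (the bookkeeping of antipodal pairs, the inequalities $0\le j\le d-1-i$, and the explicit realizing shellings) is routine once that lemma is in place; since the excerpt attributes the full result to Ehrenborg–Hetyei, I would cite \cite[Lemma 3.3]{Ehrenborg-Hetyei} for the delicate homeomorphism bookkeeping and only sketch the retraction argument here.
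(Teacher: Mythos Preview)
The paper does not prove this lemma at all: it is stated with attribution to Ehrenborg and Hetyei and cited as \cite[Lemma~3.3]{Ehrenborg-Hetyei}, with only the remark that the ``only if'' part is obvious and used by Chan without proof. So there is no argument in the paper to compare your sketch against; you are attempting to supply what the authors deliberately outsourced, and in your last sentence you end up doing the same thing they do (cite \cite{Ehrenborg-Hetyei}).

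That said, your sketch has a genuine gap. The ``cleanest route'' claim---that every nonempty proper union of facets of $\partial[-1,1]^{d'}$ is a $(d'-1)$-ball---is false as stated: the union of two antipodal facets is disconnected, hence neither a ball nor a sphere. You notice this tension a few lines later when you treat the case $i=0$, $j<d-1$ separately, but the lemma you propose to prove by induction is still phrased incorrectly, and the ``peel off one facet'' induction would break exactly at such configurations (the intersection of the peeled facet with the remainder need not be a ball of one lower dimension). The correct topological statement is more delicate: one must characterize precisely which unions of facets of a cube boundary are balls, and this depends on the pattern of complete versus incomplete antipodal pairs, not merely on properness.

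You also never address the strict upper bound $i<d-1$ in case~(ii). Your own topological analysis would show that a type $(d-1,0)$ union---one facet chosen from each antipodal pair---is a ball (it is the closed star of a vertex in $\partial F_m$), so topology of the intersection alone cannot exclude it. Whatever excludes $i=d-1$ in the Ehrenborg--Hetyei formulation, your sketch does not supply it. Given that you already intend to defer the ``delicate homeomorphism bookkeeping'' to \cite[Lemma~3.3]{Ehrenborg-Hetyei}, the honest move is to do exactly what the paper does: state the result and cite the source, rather than offer a sketch whose central lemma is misstated and whose case analysis is incomplete.
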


As a consequence of Lemma~\ref{l:EH}, we may describe the shellings of the boundary complex of an $n$-cube in the following way.

\begin{lemma}
An enumeration $(F_1,\ldots,F_{2n})$ of the facets of the boundary complex of the $n$-cube is a shelling if and only if for each $m<2n$, the set $\{F_1,\ldots, F_m\}$ contains at least one antipodally unpaired facet.  
\end{lemma}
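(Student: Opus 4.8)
The plan is to compute, for each $m$, the \emph{type} of the $m$-th shelling component directly from the enumeration, and then feed this into Lemma~\ref{l:EH} applied with $d=n$, so that the $(d-2)$-faces there are precisely the ridges (codimension-two faces) of the $n$-cube.

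First I would make the $m$-th shelling component $C_m:=F_m\cap(F_1\cup\cdots\cup F_{m-1})$ fully explicit. Write $F_m$ for the facet coded by $\eps k$, i.e.\ the face $x_k=\eps$ with $\eps\in\{-1,1\}$. A ridge of the $(n-1)$-cube $F_m$ is cut out by ``$x_k=\eps$ and $x_b=\delta$'' for a unique pair $(b,\delta)$ with $b\ne k$ and $\delta\in\{-1,1\}$; it lies on exactly one facet besides $F_m$, namely the one coded by $\delta b$, and its antipode inside the cube $F_m$ is the ridge ``$x_k=\eps$, $x_b=-\delta$'', which lies on the facet coded by $-\delta b$. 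Hence the ridge indexed by $(b,\delta)$ belongs to $C_m$ if and only if the facet $\delta b$ occurs among $F_1,\dots,F_{m-1}$. Setting $S_{m-1}:=\{F_1,\dots,F_{m-1}\}$, this says that the type $(i_m,j_m)$ of $C_m$ is
\[
 j_m=\#\{\,b\ne k:\{b,-b\}\subseteq S_{m-1}\,\},\qquad
 i_m=\#\{\,b\ne k:|S_{m-1}\cap\{b,-b\}|=1\,\},
\]
so in particular $i_m+j_m\le n-1$ always.

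Next I would invoke Lemma~\ref{l:EH}. Together with $i_m+j_m\le n-1=d-1$ it tells us that $C_m$ is an admissible shelling component unless $i_m=0$ and $j_m<n-1$; hence the enumeration is a shelling if and only if no step $m\in\{2,\dots,2n\}$ satisfies $i_m=0$ and $j_m<n-1$, and it remains to prove that such a step occurs precisely when some initial segment $S_{m'}$ with $m'<2n$ is a union of antipodal pairs of facets (equivalently, $S_{m'}$ contains no antipodally unpaired facet). One direction: if $S_{m'}$ with $m'<2n$ is a union of antipodal pairs and $F_{m'}=\pm k$, then $-F_{m'}\in S_{m'-1}$, the $m'/2<n$ coordinates used by $S_{m'}$ include $k$, and the displayed formulas give $i_{m'}=0$ and $j_{m'}=m'/2-1<n-1$ --- a forbidden type, so the enumeration is not a shelling. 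Conversely, if step $m$ has $i_m=0$ and $j_m<n-1$ with $F_m=\eps k$, then $S_{m-1}$ contains both or neither of $\{b,-b\}$ for every $b\ne k$ while $\eps k\notin S_{m-1}$; depending on whether $-\eps k\in S_{m-1}$, either $S_m=S_{m-1}\cup\{\eps k\}$ or $S_{m-1}$ itself is a union of antipodal pairs, and a short check (using $j_m<n-1$ in the first case) shows the borderline value $m=2n$ does not occur, so one obtains an initial segment of length $<2n$ with no antipodally unpaired facet.

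The step I expect to be the main obstacle is the first one: correctly identifying which ridges of $F_m$ lie in $C_m$ and, in particular, verifying that the antipodal involution of the facet-cube $F_m$ amounts to flipping the sign of the free coordinate $b$, so that ``a ridge and its antipode both lie in $C_m$'' translates into ``both of $b$ and $-b$ already appear''. Once the type $(i_m,j_m)$ is expressed through $S_{m-1}$ as above, the rest is bookkeeping, the only mild subtleties being the special status of the fixed coordinate $k$ of $F_m$ and the behaviour at the last step $m=2n$.
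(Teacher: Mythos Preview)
Your argument is correct, and it takes a somewhat different route from the paper's. You compute the type $(i_m,j_m)$ of each shelling component $C_m=F_m\cap(F_1\cup\cdots\cup F_{m-1})$ explicitly in terms of $S_{m-1}=\{F_1,\dots,F_{m-1}\}$ and then apply Lemma~\ref{l:EH} as a two-sided criterion, disposing of both implications at once. The paper treats the two directions asymmetrically: for the forward direction it argues topologically that every initial union $F_1\cup\cdots\cup F_m$ is a ball (shellable, hence a ball or a sphere; a proper subcomplex of the $(n-1)$-sphere, hence not a sphere) and then invokes Lemma~\ref{l:EH} one dimension up---viewing $F_1\cup\cdots\cup F_m$ itself as a would-be shelling component in an $n$-dimensional complex---to force an unpaired facet; for the converse it tracks the counts of unpaired and paired facets in $S_m$ recursively and checks admissibility of each $C_m$. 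Your approach is more elementary and self-contained, avoiding the appeal to ``shellable $\Rightarrow$ ball or sphere''; the paper's forward argument, by contrast, uses a template that would work for boundaries of general polytopes. One caveat worth flagging: Lemma~\ref{l:EH}(ii) as printed carries the strict inequality $0<i<d-1$, which would exclude the type $(n-1,0)$ that your argument needs (and that does arise, e.g.\ at step $m=n$ of the enumeration $1,2,\ldots,n,-1,\ldots,-n$); the intended bound is $0<i\le d-1$, and this is easy to verify directly since $d-1$ pairwise non-antipodal facets of a $(d-1)$-cube share a common vertex and their union is a cone over a $(d-3)$-ball.
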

\begin{proof}
On the one hand, the cubical complex $F_1\cup\cdots\cup F_m$ is shellable and $(n-1)$-dimensional, and as such it is homeomorphic to a $(n-1)$-ball or an $(n-1)$-sphere. Since the boundary complex $F_1\cup \cdots F_{2n}$ is an $(n-1)$-sphere, the proper subcomplex $F_1\cup\cdots\cup F_m$ can only be an $(n-1)$-ball. By \cite[Lemma 3.3]{Ehrenborg-Hetyei} (see part (i) of Lemma~\ref{l:EH} above), $F_1\cup\cdots\cup F_m$ must contain at least one antipodally unpaired facet. 

Conversely, assume that $F_1\cup\cdots\cup F_m$ contains at least one antipodally unpaired facet for each $m<2n$. In other words, each $F_1\cup\cdots\cup F_m$ could be used as a shelling component in the shelling of an $n$-dimensional cubical complex, and its type $(i_m,j_m)$ satisfies $i_m>0$. Adding $F_m$ to $F_1\cup\cdots\cup F_{m-1}$ results either in introducing a new antipodally unpaired facet or $F_m$ is the antipodal pair of a previously listed facet. In the first case, we have $i_m=i_{m-1}+1$ and $j_m=j_{m-1}$, so $F_m\cap (F_1\cup\cdots\cup F_{m-1})$ is a shelling component of type $(i_m,j_m)$. In the second case, we have $i_m=i_{m-1}-1$ and $j_m=j_{m-1}+1$, so $F_m\cap (F_1\cup\cdots\cup F_{m-1})$ is a shelling component of type $(i_m,j_m)$. Finally, for $m=2n$, $F_{2n}\cap (F_1\cup\cdots\cup F_{2n-1})$ is a shelling component of type $(0,n-1)$. 
\end{proof}

For each initial segment of an enumeration of the facets of a shelling
of the boundary of the $n$-cube, there is at least one antipodal pair of
facets such that exactly one of the two facets belongs to the shelling
component, i.e., the facet $k$ is listed in the first $i$ facets of the
shelling but facet $-k$ facet is not~\cite{Ehrenborg-Hetyei}. 

\begin{corollary}
\label{cor:conn}
A permutation of $\{\pm 1, \pm 2, \ldots , \pm n\}$ is sign-connected if and only if it represents a shelling of the facets of the hypercube.  Similarly, a sign-disconnected permutation represents an enumeration of the facets which is not a shelling.
\end{corollary}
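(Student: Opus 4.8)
The plan is to obtain the statement directly, by translating the combinatorial shelling criterion of the lemma just proved into the language of permutations of $\{\pm 1,\pm 2,\ldots,\pm n\}$, using the bijection between enumerations of the facets of the $n$-cube and such permutations described at the beginning of Section~\ref{sec:shell}.

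First I would recall that, under this encoding, the facet labeled $k$ and the facet labeled $-k$ form an antipodal pair: the central symmetry $x\mapsto -x$ of $[-1,1]^n$ carries the facet $\{u_k=1\}$ onto the facet $\{u_k=-1\}$, and these are the only antipodal pairs of facets. Consequently, if $(F_1,\ldots,F_{2n})$ is the enumeration of facets corresponding to a permutation $\pi=\pi(1)\cdots\pi(2n)$, then for a fixed $m$ the set $\{F_1,\ldots,F_m\}$ contains an antipodally unpaired facet if and only if there exists $j\in\{1,\ldots,n\}$ such that exactly one of $j$ and $-j$ occurs among $\pi(1),\ldots,\pi(m)$, that is, $|\{\pi(1),\ldots,\pi(m)\}\cap\{-j,j\}|=1$.

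Next I would invoke the preceding lemma, which asserts that $(F_1,\ldots,F_{2n})$ is a shelling of the boundary of the $n$-cube if and only if the prefix condition above holds for every $m<2n$. Comparing this with Definition~\ref{def:sign-conn}, the requirement ``for all $1\leq m<2n$ there is a $j$ with $|\{\pi(1),\ldots,\pi(m)\}\cap\{-j,j\}|=1$'' is precisely the definition of $\pi$ being sign-connected, which proves the first assertion. The second assertion is then immediate: a permutation is sign-disconnected exactly when it fails to be sign-connected, hence, by the first assertion, exactly when the associated enumeration of facets fails to be a shelling.

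I expect no real obstacle here; all of the substance has already been carried out in the lemma preceding the corollary (and, underneath it, in Lemma~\ref{l:EH} of Ehrenborg--Hetyei). The only point deserving a word of care is verifying that the labels $k$ and $-k$ index antipodal facets and that no other antipodal relations occur, which is immediate from the Metropolis--Rota $\{-1,*,1\}$-encoding of the faces of the cube.
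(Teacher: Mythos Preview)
Your proposal is correct and follows exactly the route the paper intends: the corollary is stated without proof as an immediate consequence of the preceding lemma together with Definition~\ref{def:sign-conn}, and your argument spells out precisely that translation, including the identification of the antipodal pair $\{k,-k\}$ under the Metropolis--Rota encoding.
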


As a result of Corollary~\ref{cor:conn}, the number of sign-connected permutations equals the number of shellings of the boundary of the $n$-cube.  Given any $n$, this number can be found by the recursive formula $a_n = (2n-1)!!-\sum_{k=0}^{n-1} (2k-1)!! \cdot a_{n-k}$.  The sequence $\{a_n\}$ is sequence A000698 in the On-Line Encyclopedia of Integer Sequences~\cite{A000698}.  

\section{Equivalence Classes of the Enumerations of the Facets of the $n$-Cube}
\label{sec:hyp}

The isometries of the $n$-cube permute its facets, inducing a $B_n$-action on the enumerations of all facets of the boundary of the $n$-cube.  This action is free, i.e., any nontrivial isometry takes each enumeration into a different enumeration.

\begin{definition}
We consider two enumerations of the facets of the $n$-cube equivalent if they can be transformed into each other by an isometry of the $n$-cube.  
\end{definition}

As noted in Section~\ref{sec:shell}, every enumeration of the facets of
the boundary of the $n$-cube can be identified with a signed
permutation.  The induced action of $B_n$ on these signed permutations
is generated by the following operations: 
\begin{enumerate}
\item For each $k\in\{1,\ldots,n\}$ there is a reflection
  $\varepsilon_k$  interchanging $k$ with $-k$ and leaving all other
  entries unchanged;  
\item For each $\{i,j\}\subset\{1,\ldots,n\}$ there is a
  reflection $\rho_{i,j}$ interchanging $i$ with $j$ and $-i$ with $-j$,
  leaving all other entries unchanged.   
\end{enumerate} 
It is worth noting that we may also identify the elements of $B_n$ with
signed permutations the usual way, the action of $B_n$ we consider is
then the action of $B_n$ on itself, via conjugation. There exist $2^n
\cdot n!$ symmetries of the $n$-cube~\cite{Humphreys}.  Since the $B_n$
action is free, all equivalence classes have the same cardinality,
giving a total of $\dfrac{(2n)!}{2^n \cdot n!}=(2n-1)!!$ equivalence
classes. 

\begin{definition}
\label{def:std}
A standard permutation is defined to be an permutation of $\{\pm1,\ldots ,\pm n\}$ such that the following two properties hold:
\begin{enumerate}
\item for all  $i$, $i$ occurs before $-i$ in the list, and 
\item the negative numbers in the list appear in the following order:
  $-1, -2, \ldots , -n$. 
\end{enumerate}
\end{definition}

Since there are $2^n$ ways to assign the negative sign to the pair $k$ and $-k$ and $n!$ ways to order $\{-1,\ldots ,-n\}$, we have a total of $\dfrac{(2n)!}{2^n \cdot n!}=(2n-1)!!$ standard permutations. 

\begin{lemma}
Each equivalence class of the enumerations of the facets of an $n$-cube corresponds to exactly one standard permutation.
\end{lemma}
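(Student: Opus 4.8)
The plan is to prove that each equivalence class contains \emph{at least} one standard permutation, and then to invoke the two facts recorded just above the lemma --- that there are exactly $(2n-1)!!$ equivalence classes and exactly $(2n-1)!!$ standard permutations --- to conclude that each class contains \emph{exactly} one; assigning to a class its unique standard representative is then the desired bijection. (Equivalently, one could prove uniqueness directly and let the same count supply existence.)

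To produce a standard representative from an arbitrary signed permutation $\pi$ of $\{\pm1,\ldots,\pm n\}$, I would normalize in two steps, using the two families of generators of the $B_n$-action described above. \emph{Step 1 (achieving condition~(1)).} For each $k$, exactly one of $k,-k$ occurs before the other in $\pi$; whenever it is $-k$, apply the reflection $\varepsilon_k$, which interchanges the symbols $k$ and $-k$ and fixes all other entries. Since distinct $\varepsilon_k$ act on disjoint pairs, the order in which they are applied is immaterial, and the resulting permutation has $k$ before $-k$ for every $k$, i.e. it satisfies condition~(1). \emph{Step 2 (achieving condition~(2) while keeping condition~(1)).} In the permutation produced by Step~1, read off the negative entries in the order in which they appear, say $-a_1,-a_2,\ldots,-a_n$ where $(a_1,\ldots,a_n)$ is a permutation of $(1,\ldots,n)$. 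Let $\sigma\in S_n$ be the permutation with $\sigma(a_i)=i$ for all $i$, and apply the product of reflections $\rho_{i,j}$ realizing the relabeling that sends each symbol $\pm k$ to $\pm\sigma(k)$; this is possible because transpositions generate $S_n$. This relabeling sends positive symbols to positive symbols and negative symbols to negative symbols without moving any entry, so it preserves condition~(1), and by the choice of $\sigma$ the negative entries now appear in the order $-1,-2,\ldots,-n$, so condition~(2) holds as well. The outcome is a standard permutation lying in the same equivalence class as $\pi$.

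For uniqueness (if one prefers that route) one runs the argument in reverse: if $\pi$ and $g\pi$ are both standard, decompose $g\in B_n$ as a relabeling $\tau\in S_n$ followed by the sign changes in a set $S\subseteq\{1,\ldots,n\}$. Applying the relabeling keeps the positive symbol first in every pair, so if $S$ were nonempty then for $m\in S$ the symbol $-m$ would precede $m$ in $g\pi$, contradicting condition~(1); hence $S=\emptyset$ and $g$ is a pure relabeling. Then condition~(2) for $g\pi$, combined with condition~(2) for $\pi$, forces $\tau$ to be the identity, so $g\pi=\pi$.

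The only points needing care in the write-up --- and the main (mild) obstacle --- are: (a) verifying that the relabeling in Step~2 does not destroy condition~(1), which reduces to the observation that the $\rho_{i,j}$ merely permute the \emph{pairs} $\{k,-k\}$, leaving fixed which symbol of each pair is positive and leaving every position occupied by a symbol of the same pair; and (b) pinning down the normal-form decomposition $g=(\text{sign changes})\circ(\text{relabeling})$ of an element of $B_n=(\mathbb{Z}/2)^n\rtimes S_n$ precisely enough that the uniqueness argument is unambiguous. Neither is deep: the substance of the lemma is simply that conditions~(1) and~(2) single out one orbit representative, which the two-step normalization makes transparent, with the prior enumeration $(2n-1)!!=(2n-1)!!$ closing the argument.
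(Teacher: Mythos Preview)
Your proposal is correct and follows essentially the same approach as the paper: reduce to existence via the count $(2n-1)!!=(2n-1)!!$, then normalize any $\pi$ first by the sign-changes $\varepsilon_k$ to secure condition~(1) and then by the relabelings $\rho_{i,j}$ to secure condition~(2), observing that the second step preserves the first. Your write-up is simply more explicit about why Step~2 preserves condition~(1) and adds an optional direct uniqueness argument that the paper omits.
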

\begin{proof}
Since the number of equivalence classes and the number of standard permutations are both $(2n-1)!!$, we need only to show that every $\pi \in {\mathcal S}_{\{\pm 1,\ldots ,\pm n\}}$ is equivalent to a standard permutation. 

Pick any $\pi \in {\mathcal S}_{\{\pm 1,\ldots ,\pm n\}}$, and suppose
$\Pi$ is the equivalence class which contains $\pi$.  
Applying only reflections $\varepsilon_k\in B_n$ we may replace $\pi$
with a $\pi'\in \Pi$ which satisfies condition (1) in
Definition~\ref{def:std}. Applying only reflections $\rho_{i,j}\in
B_n$ we may replace $\pi'$ with a $\pi''\in \Pi$ that also satisfies  
condition (2) in Definition~\ref{def:std}. Note that the application of
an operator $\rho_{i,j}$ leaves the validity of condition (1) unchanged.
\end{proof}

The set of sign-connected permutations is closed under the action of
$B_n$; thus, we can think of equivalence classes of shellings as types
of shellings. The following characterization of standard sign-connected
permutations is an immediate consequence of
Definitions~\ref{def:sign-conn} and \ref{def:std}.
\begin{lemma}
\label{l:std-conn}
A standard permutation $\pi=\pi(1) \ldots  \pi(2n)$ of $\{\pm 1, \pm 2, \ldots , \pm n\}$ is sign-disconnected if and only if there exists an $i$ such that $|\pi(j)| \le i$ for all $j \le 2i$ (and $|\pi(j)| \ge i+1$ for all $j > 2i$).  
\end{lemma}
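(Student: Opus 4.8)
The plan is to unwind Definition~\ref{def:sign-conn} in the special case of a standard permutation. First observe that $\pi$ is sign-disconnected exactly when there is an $m$ with $1\le m<2n$ such that $|\{\pi(1),\ldots,\pi(m)\}\cap\{-j,j\}|\in\{0,2\}$ for every $j\in\{1,\ldots,n\}$; equivalently, the initial segment $\pi(1)\cdots\pi(m)$ is a union of complete antipodal pairs $\{j,-j\}$. In particular such an $m$ must be even, so I will write $m=2i$ with $1\le i\le n-1$. The statement to be proved then reduces to the following: for a standard permutation $\pi$, the first $2i$ entries form a union of antipodal pairs if and only if $\{\pi(1),\ldots,\pi(2i)\}=\{\pm 1,\pm 2,\ldots,\pm i\}$. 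Note that the latter is precisely the condition $|\pi(j)|\le i$ for all $j\le 2i$, and it automatically forces $|\pi(j)|\ge i+1$ for all $j>2i$, since the remaining $2n-2i$ entries are then exactly $\{\pm(i+1),\ldots,\pm n\}$; so the parenthetical clause in the lemma carries no extra content.

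For the forward implication I would invoke condition (2) of Definition~\ref{def:std}. Assume $\{\pi(1),\ldots,\pi(2i)\}$ is a union of antipodal pairs, and let $t$ be the number of negative entries among $\pi(1),\ldots,\pi(2i)$. Since the negative entries of $\pi$ occur in the fixed order $-1,-2,\ldots,-n$, the negatives lying in the first $2i$ positions must form an initial segment $\{-1,-2,\ldots,-t\}$ of that list. On the other hand, a union of $i$ antipodal pairs contains exactly $i$ negative numbers, so $t=i$; hence $\{-1,\ldots,-i\}\subseteq\{\pi(1),\ldots,\pi(2i)\}$, and pairing each $-k$ with its partner $k$ (which also lies in the first $2i$ positions, since these form a union of antipodal pairs) gives $\{\pm 1,\ldots,\pm i\}\subseteq\{\pi(1),\ldots,\pi(2i)\}$. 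Comparing cardinalities yields equality.

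The converse is easier and needs no standardness hypothesis: if $|\pi(j)|\le i$ for all $j\le 2i$, then by injectivity of $\pi$ the set $\{\pi(1),\ldots,\pi(2i)\}$ is a $2i$-element subset of the $2i$-element set $\{\pm 1,\ldots,\pm i\}$, hence equals it, which is visibly a union of antipodal pairs; choosing $m=2i<2n$ then witnesses that $\pi$ is sign-disconnected.

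I do not expect a serious obstacle: the argument is short. The one point that requires care is the step in the forward direction that passes from ``the negatives among the first $2i$ entries'' to ``an initial segment $\{-1,\ldots,-t\}$ of $-1,\ldots,-n$'' — this is exactly where condition (2) is used, and it is what excludes sign-disconnected but non-standard examples such as $3,-3,1,-1,2,-2$. I would also keep the index $i$ in the range $1\le i\le n-1$ throughout, so that $m=2i$ is a legitimate witness (i.e. $m<2n$), which is the implicit convention in the statement of the lemma.
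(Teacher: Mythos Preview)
Your proof is correct and follows exactly the approach the paper intends: the paper states that the lemma ``is an immediate consequence of Definitions~\ref{def:sign-conn} and \ref{def:std}'' and gives no further argument, and your write-up is precisely the careful unwinding of those two definitions, with the key use of condition~(2) to pin down the negative entries in an initial segment.
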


Applying the definition of a standard permutation to our previous notion of a sign-connected permutation, we get the following characterization.

\begin{lemma}
\label{l:ineq}
A standard permutation $\pi=\pi(1) \cdots \pi(2n)$ is sign-connected if
and only if for all $m<2n$, the sum $\pi(1)+\cdots+\pi(m)> 0$.
\end{lemma}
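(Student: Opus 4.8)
The plan is to reduce the statement to a single elementary identity for the partial sums $S_m := \pi(1) + \cdots + \pi(m)$ of a standard permutation, and then to read off both directions from Definition~\ref{def:sign-conn}.

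First I would isolate the structural fact that does all the work: in a standard permutation the entry $j$ precedes the entry $-j$ for every $j$ (property (1) of Definition~\ref{def:std}). Writing $A_m := \{\pi(1), \ldots, \pi(m)\}$, this means $-j \in A_m$ implies $j \in A_m$, so for each $j \in \{1, \ldots, n\}$ the only possibilities are: both $j$ and $-j$ lie in $A_m$; exactly $j$ lies in $A_m$; or neither lies in $A_m$. In particular $|A_m \cap \{-j, j\}| = 1$ is equivalent to ``$j \in A_m$ and $-j \notin A_m$''. Grouping the summands of $S_m$ by antipodal pairs, the pair $\{j, -j\}$ contributes $j$ in the second case and $0$ in the first and third, whence
\[
S_m = \sum_{\substack{1 \le j \le n\\ j \in A_m,\ -j \notin A_m}} j .
\]
This exhibits $S_m$ as a sum of positive integers, so $S_m \ge 0$, and $S_m > 0$ if and only if there is at least one $j$ with $j \in A_m$ and $-j \notin A_m$, i.e., at least one $j$ with $|A_m \cap \{-j, j\}| = 1$.

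The lemma now follows at once: by Definition~\ref{def:sign-conn}, $\pi$ is sign-connected exactly when for every $m$ with $1 \le m < 2n$ there is some $j$ with $|A_m \cap \{-j, j\}| = 1$, and by the displayed identity this holds for a given $m$ precisely when $S_m > 0$. One could alternatively route the argument through Lemma~\ref{l:std-conn}: the identity shows $S_m = 0$ iff $A_m$ is a union of complete antipodal pairs, and property (2) of Definition~\ref{def:std} then forces such an $A_m$ to equal $\{\pm 1, \ldots, \pm i\}$ with $m = 2i$, which is exactly the sign-disconnectedness condition of Lemma~\ref{l:std-conn}. Note also that $S_{2n} = 0$ automatically, which is why the statement restricts to $m < 2n$.

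I do not expect a genuine obstacle here; the only step needing a little care is the sign bookkeeping — checking that the standardness hypothesis really does prevent an antipodal pair $\{j,-j\}$ from ever contributing the negative amount $-j$ to a partial sum, so that $S_m$ is a sum of nonnegative terms rather than an arbitrary integer. Once that is pinned down, both implications are immediate consequences of the single displayed formula.
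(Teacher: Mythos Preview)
Your proof is correct and follows essentially the same approach as the paper's: both use that $j$ precedes $-j$ in a standard permutation to conclude that the partial sum $S_m$ is nonnegative, with equality precisely when $\{\pi(1),\ldots,\pi(m)\}$ is a union of antipodal pairs, which is exactly the sign-disconnectedness condition. Your version is simply more explicit in writing out the identity $S_m=\sum_{j\in A_m,\,-j\notin A_m} j$, whereas the paper states the same conclusions more tersely.
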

\begin{proof}
By definition of a standard permutation, each $j$ appears before $-j$,
thus we have $\pi(1)+\cdots+\pi(m)\geq 0$ for every $m\leq 2n$. Equality
occurs exactly when the set $\{\pi(1),\ldots,\pi(m)\}$ is the union of
pairs of the form $\{j,-j\}$. The existence of an $m<2n$ satisfying 
$\pi(1)+\cdots+\pi(m)=0$ is thus equivalent to the standard permutation
being sign-disconnected. 
\end{proof}

\begin{remark}
\label{rem:ineq}
Note that the proof of the ``if'' part of Lemma~\ref{l:ineq} may be
restated in the following, stronger from: $\pi=\pi(1) \cdots \pi(2n)$ is
sign-connected if for all $m<2n$ we have $\pi(1)+\cdots+\pi(m)\neq 0$. 
\end{remark}

The standard permutation in Example~\ref{ex:discon} is sign-disconnected.

\section{Arc Diagrams}
\label{sec:arc}

A standard permutation $\pi \in {\mathcal S}_{\{\pm 1,\ldots, \pm n\}}$
of the facets of the boundary of the $n$-cube can be visually represented by an arc
diagram.  The arc diagram is constructed as follows: put $2n$ vertices
in a row; label them left to right with $\pi(1),\ldots, \pi(2n)$; 
then for each $k\in\{1,\ldots,n\}$, create an arc connecting the
vertices labeled $-k$ and $k$.  See Fig.~\ref{fig:arc01} for an example
when $n=3$. Hence, in the associated arc diagram of a standard permutation,
the order of the labels of the vertices will match the order of the standard
permutation.

Note that the labels of the vertices in the arc diagram are uniquely
determined by the underlying complete matching.  This matching is 
represented by the arcs where the right endpoints of the arcs must be 
labeled left to right by $-1,-2,\ldots, -n$, in this order, and the left 
endpoint of the arc whose right end is labeled $-k$ must be be labeled 
$k$. Hence, using these rules, we can uniquely reconstruct the
associated standard permutation from its the arc diagram. Thus, the arc diagram
representation provides a bijection  
between standard permutations in ${\mathcal S}_{\{\pm 1,\ldots, \pm n\}}$ 
and complete matchings of a $2n$ element set. 

\begin{figure}[h]
\input{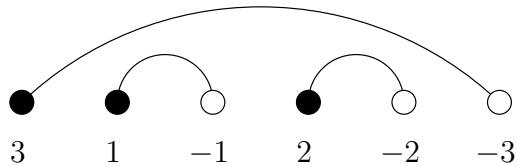}
\caption{The arc diagram associated to $(3,1,-1,2,-2,-3)$}
\label{fig:arc01}
\end{figure}

Almost the same bijection appears in the work of
Drake~\cite{Drake}, the only difference being that Drake encodes
complete matchings with {\em standard double occurrence words} in the letters
$1,2,\ldots,n$. A double occurrence word is a word  
in which each letter occurs exactly twice. Ossona de Mendez and
Rosenstiehl~\cite{OM-Rosenstiehl} call a double occurrence word
{\em standard} if the first occurrence of the letters happens in
increasing order. (Note that Drake~\cite{Drake} omits the adjective
standard, but he adopts his terminology from~\cite{OM-Rosenstiehl}, and
the words he uses to encode complete matchings are the standard
double occurrence words.) 

Taking the {\em reverse complement} of such a double occurrence word
and changing the second occurrence of each $k$ to $-k$ results in a
standard permutation. This correspondence is a bijection. (As usual,
the complement of a word in the letters $1,2,\ldots,n$ is the word
obtained by replacing each letter $i$ with $n+1-i$, and the reverse of a
word $w_1 w_2\cdots w_{2n}$ is the word $w_{2n}w_{2n-1}\cdots w_1$.) For
example, the standard permutation $(3,1,-1,2,-2,-3)$ corresponds to the
same matching as the double occurrence word $122331$ in Drake's
work~\cite{Drake}. 

\begin{remark}
Let us associate to each standard permutation $\pi$ of $\{\pm 1, \pm 2,
\ldots, \pm n\}$  
the {\em fixed point free involution $\widehat{\pi}$} of $\{\pm 1, \pm
2, \ldots , \pm n\}$  
that exchanges the elements $\pi^{-1}(-i)$ and $\pi^{-1}(i)$ for
$i=1,2,\ldots,n$.  
It is easy to see that this correspondence is a bijection. Under this
bijection, sign-connected standard permutations correspond to
indecomposable fixed point free involutions, as defined by Cori~\cite{Cori}. 
\end{remark}

For each $k\in\{1,\ldots,n\}$, the definition of a standard permutation
forces the vertex labeled $k$ to be to the left of the vertex labeled
$-k$ in the arc diagram.  Any arc diagram associated to a standard
permutation can be represented by a word $a_1 a_2 \ldots  a_n$,
recursively constructed as follows. Let $a_n$ be the position of the
rightmost arc's left endpoint, i.e., $a_n$ is the position of the
vertex labeled $n$.  Remove the rightmost arc from the diagram, and repeat
the process until all arcs are removed.  As a result of how we defined
this representation, $1 \le a_i \le 2i-1$ for each $a_i$ in the word
$a_1 a_2 \ldots  a_n$, and each word of this form corresponds to exactly
one standard permutation.  Conversely, every such word $a_1\ldots a_n$
will encode some arc diagram. 
\begin{example}
\label{ex:con}
$(3,1,-1,2,-2,-3) \cong 131$.
\end{example}

\begin{definition}
We will denote the standard permutation which the word $a_1\cdots a_n$ encodes by $\pi(a_1\cdots a_n)$.
\end{definition}

\begin{lemma}
\label{lem:discon}
A standard permutation $\pi(a_1 a_2 \ldots  a_n)$ is sign-disconnected if and only if there exists a $k \ge 2$ such that $a_k=2k-1$ and $a_j \ge a_k = 2k-1$ for all $j > k$.
\end{lemma}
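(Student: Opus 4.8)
The plan is to translate the combinatorial condition in Lemma~\ref{lem:discon} into the arithmetic condition of Lemma~\ref{l:ineq} (via Remark~\ref{rem:ineq} and Lemma~\ref{l:std-conn}), by tracking how the word $a_1a_2\cdots a_n$ records the construction of the arc diagram. Recall that $a_k$ is the position of the vertex labeled $k$ in the arc diagram obtained after the arcs labeled $n, n-1, \ldots, k+1$ have already been removed; at that stage there are $2k-1$ vertices (the two endpoints of each of the arcs $1,\ldots,k-1$, together with the single vertex labeled $k$), so $a_k$ ranges over $\{1,\ldots,2k-1\}$, with $a_k = 2k-1$ exactly when vertex $k$ is the rightmost among those $2k-1$ vertices. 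The key observation is: if $a_k = 2k-1$ and $a_j \ge 2k-1$ for all $j > k$, then in the full arc diagram the vertices labeled $k, -k$ and $\pm(k+1), \ldots, \pm n$ all lie to the right of the vertices labeled $\pm 1, \ldots, \pm(k-1)$ — because inserting an arc whose left endpoint is at position $\ge 2k-1$ (in the diagram before that insertion) never places an endpoint to the left of the block already occupied by $\{\pm 1,\ldots,\pm(k-1)\}$. I expect that verifying this ``insertions to the right stay to the right'' claim carefully is the main obstacle, and I would handle it by a downward induction on $j$ from $n$ to $k$, showing at each stage that the first $2(k-1)$ positions of the current diagram are exactly the endpoints of the arcs $1,\ldots,k-1$ in their final relative order.

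Granting that, here is the structure. \textbf{(Sufficiency.)} Suppose such a $k \ge 2$ exists. By the observation above, in the full arc diagram the first $2(k-1)$ vertices are precisely the endpoints of arcs $1,\ldots,k-1$, so $\{|\pi(1)|,\ldots,|\pi(2(k-1))|\} = \{1,\ldots,k-1\}$ and hence $|\pi(j)| \le k-1$ for all $j \le 2(k-1)$ and $|\pi(j)| \ge k$ for all $j > 2(k-1)$. Applying Lemma~\ref{l:std-conn} with $i = k-1$ (note $k - 1 \ge 1$), $\pi$ is sign-disconnected. Equivalently, $\pi(1)+\cdots+\pi(2(k-1)) = 0$ by Lemma~\ref{l:ineq}, since that initial segment is a union of pairs $\{j,-j\}$.

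\textbf{(Necessity.)} Suppose $\pi = \pi(a_1\cdots a_n)$ is sign-disconnected. By Lemma~\ref{l:std-conn} there is an $i$ with $1 \le i \le n-1$ such that $|\pi(j)| \le i$ for all $j \le 2i$ and $|\pi(j)| \ge i+1$ for all $j > 2i$; set $k = i+1 \ge 2$. Then in the arc diagram the first $2(k-1) = 2i$ vertices are exactly the endpoints of arcs $1,\ldots,k-1$, and all endpoints of arcs $k, k+1,\ldots,n$ lie among positions $2i+1,\ldots,2n$. When we run the recursive stripping procedure, removing the arc labeled $j$ for $j = n, n-1, \ldots, k$ never disturbs the initial block of $2i = 2(k-1)$ vertices carrying labels $\pm 1,\ldots,\pm(k-1)$, because at every such stage both endpoints of arc $j$ sit to the right of that block; hence for each $j \ge k$ the recorded position $a_j$ is at least $2i + 1 = 2(k-1)+1 = 2k-1$. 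In particular $a_j \ge 2k-1$ for all $j > k$, and for $j = k$ itself, vertex $k$ is the unique vertex of its stage not belonging to arcs $1,\ldots,k-1$ and it lies to the right of all $2(k-1)$ of those endpoints, so its position is $2(k-1)+1 = 2k-1$, i.e.\ $a_k = 2k-1$. This gives the desired $k$, completing the proof. \hfill$\square$

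(If the ``stripping never disturbs the initial block'' step needs more than a sentence, I would isolate it as a short sub-claim: \emph{for any word $a_1\cdots a_n$ and any $k$, if $a_j \ge 2k-1$ for all $j \ge k$ then, in every intermediate arc diagram occurring after removing arcs $n,\ldots,j+1$ for $j \ge k-1$, the leftmost $2(k-1)$ vertices are exactly the endpoints of arcs $1,\ldots,k-1$ in their final left-to-right order} — proved by downward induction on $j$, the inductive step amounting to the remark that inserting a new left endpoint at position $\ge 2k-1$ and its matching right endpoint somewhere to the right of it only shifts positions $\ge 2k-1$.)
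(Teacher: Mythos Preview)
Your proof is correct and follows essentially the same route as the paper's: both directions go through Lemma~\ref{l:std-conn}, converting sign-disconnectedness into the positional statement that the first $2(k-1)$ vertices of the arc diagram carry exactly the labels $\pm 1,\ldots,\pm(k-1)$, and then reading off the condition on the $a_j$'s. The paper's version is terser---it asserts without further comment that ``$a_j\ge 2k-1$ for all $j\ge k$'' is equivalent to all left endpoints of arcs $k,\ldots,n$ lying at positions $\ge 2k-1$ in the full diagram---whereas you isolate and justify this as the ``stripping never disturbs the initial block'' sub-claim; that extra care is welcome but not a different argument. One small imprecision: at the stage when $a_k$ is recorded there are $2k$ vertices, not $2k-1$ (the vertex $-k$ is still present, sitting at the far right in position $2k$); your conclusion that $a_k=2k-1$ is unaffected.
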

\begin{proof}
Suppose $a_1\ldots a_n$ encodes a sign-disconnected standard permutation $\pi(1)\ldots \pi(2n)$.  Then by Lemma~\ref{l:std-conn}, there exists an $i$ such that $|\pi(j)| \ge i+1$ for all $j > 2i$, meaning that in the associated arc diagram both ends of the $(i+1)^{st}$ through $n^{th}$ arcs are located at vertex positions $2i+1$ or greater.  Hence, $a_{i+1}=2i+1$ and $a_j \ge 2i+1$ for $j > i$.

Conversely, let $a_1 \ldots  a_n$ encode a standard permutation $\pi(1) \ldots  \pi(2n)$.
Suppose $k$ is the smallest integer such that $k\geq 2$, $a_k=2k-1$ and
$a_j \ge a_k = 2k-1$ for all $j > k$. 
Then in the associated arc diagram, the right endpoint of the $(k-1)^{st}$ arc is located at vertex position $2k-2$.  Thus, $\pi(2k-2)=-(k-1)$ since the right endpoint of any arc is labeled with the negative number of the antipodal pair.
Because $\pi(1) \ldots  \pi(2n)$ is a standard permutation, we have $|\pi(j)| \le k-1$ for all $j \le 2k-2$.
Also, $a_j \ge 2k-1$ for all $j \ge k$ in the word encoding $\pi$'s associated arc diagram means that the left endpoints of the $j^{th}$ arcs are all at vertex position $2k-1$ or greater.
Hence, $|\pi(j)| \ge k$ for all $j \ge 2k-1$.
\end{proof}

Clearly, a standard permutation is sign-disconnected if and only if
a vertical line can be drawn between the first and last vertices of the 
associated arc diagram which does not intersect any of the arcs 
and which separates two adjacent vertices. In other words, the
arc diagram is comprised of more than one component of overlapping arcs.
Drake~\cite{Drake} noted that this occurs if there exists a vertex $k$,
$1<k<2n$, which is not nested under any arc in the diagram.  
We will refer to the arc diagram of a sign-connected standard permutation as
connected and to the arc diagram of a sign-disconnected standard
permutation as disconnected. An arc diagram is connected  
exactly when it represents a connected matching as defined by
Drake~\cite{Drake}. In terms of standard double occurrence words, connected arc
diagrams correspond to connected standard double occurrence words in the
work of Ossona de Mendez and Rosenstiehl~\cite{OM-Rosenstiehl}.

We will call a word $a_1 \cdots a_n$ {\it arc-connected} if and only if
the arc diagram associated to $\pi(a_1 \cdots a_n)$ is connected,
otherwise we call the word {\it arc-disconnected}.  

\begin{example}
\label{ex:discon}
The arc diagram associated to $(1,2,-1,-2,3,-3) \cong 125$, shown in
Fig.~\ref{fig:arc02}, is disconnected. 

\begin{figure}[h]
\input{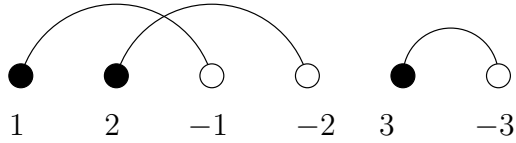}
\caption{The arc diagram associated to $(1,2,-1,-2,3,-3)$}
\label{fig:arc02}
\end{figure}
\end{example}

\begin{corollary}
\label{cor:arc-conn}
A word is arc-disconnected if and only if there exists a $k$ such that $a_k=2k-1$ and $a_j \ge a_k = 2k-1$ for all $j > k$.  Similarly, a word is arc-connected if and only if no such $k$ exists.
\end{corollary}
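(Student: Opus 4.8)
The plan is to obtain Corollary~\ref{cor:arc-conn} as a direct translation of Lemma~\ref{lem:discon} through the chain of definitions introduced just above it. By definition, a word $a_1\cdots a_n$ is arc-disconnected precisely when the arc diagram associated to $\pi(a_1\cdots a_n)$ is disconnected; and an arc diagram of a standard permutation was declared disconnected precisely when that standard permutation is sign-disconnected. Composing these two equivalences, $a_1\cdots a_n$ is arc-disconnected if and only if $\pi(a_1\cdots a_n)$ is sign-disconnected. Now Lemma~\ref{lem:discon} says exactly that $\pi(a_1\cdots a_n)$ is sign-disconnected if and only if there is a $k\ge 2$ with $a_k=2k-1$ and $a_j\ge a_k=2k-1$ for all $j>k$, which is the displayed condition; the arc-connected half of the corollary is then its logical negation. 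So the only ``work'' is to unwind the definitions, and no new combinatorics is needed.

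If one wished to avoid quoting Lemma~\ref{lem:discon}, I would instead argue directly from the arc diagram. The geometric reformulation recorded before the corollary says that $\pi(a_1\cdots a_n)$ is sign-disconnected exactly when the diagram admits a ``cut'': a position $p$ with $1\le p\le 2n-1$ such that no arc has one endpoint at position $\le p$ and the other at position $>p$. Given such a cut, the $p$ vertices to its left are perfectly matched among themselves, so $p=2\ell$ is even; since the right endpoints are labelled $-1,-2,\dots,-n$ from left to right, the $\ell$ arcs to the left of the cut must be exactly arcs $1,\dots,\ell$, and the $n-\ell$ arcs to the right exactly arcs $\ell+1,\dots,n$. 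Tracking positions in the recursive construction of $a_1\cdots a_n$ then forces $a_{\ell+1}=2\ell+1=2(\ell+1)-1$ and $a_j\ge 2\ell+1$ for every $j>\ell+1$, so $k=\ell+1$ witnesses the displayed condition; conversely, any such $k\ge 2$ makes $p=2k-2$ a cut, by the same position bookkeeping run in reverse. This reproduces the corollary without reference to the earlier lemmas, at the cost of a short position-bookkeeping argument.

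The one point that must be handled carefully, in either route, is the range of $k$. Because $1\le a_1\le 2\cdot 1-1=1$ forces $a_1=1=2\cdot 1-1$ and $a_j\ge 1$ holds trivially for all $j$, the index $k=1$ always satisfies the displayed inequality vacuously; hence, exactly as in Lemma~\ref{lem:discon}, the quantifier ``there exists a $k$'' in the corollary must be read as ``there exists a $k$ with $2\le k\le n$'' for the statement to be correct. Beyond making this convention explicit, I do not expect any genuine obstacle: the corollary is essentially a change of vocabulary.
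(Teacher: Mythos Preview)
Your proposal is correct and matches the paper's approach: the corollary is stated without proof precisely because it is meant to be an immediate restatement of Lemma~\ref{lem:discon} under the definitions of ``arc-connected'' and ``arc-disconnected'' given just before it. Your observation that the quantifier must be read as $k\ge 2$ (since $k=1$ satisfies the displayed condition vacuously for every word) is a genuine catch---the paper's Lemma~\ref{lem:discon} includes this restriction but the corollary drops it, and you are right that it is needed for the statement to be true.
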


\begin{definition}
A minimal arc will be defined to be a connected component of an arc diagram such that the component consists of exactly one arc.
\end{definition}

For example, in the arc diagram of Example~\ref{ex:discon} (see Fig.~\ref{fig:arc02}), there is one minimal arc located at the rightmost end of the diagram.  However, in the arc diagram of Example~\ref{ex:con} (see Fig.~\ref{fig:arc01}), there is no minimal arc since the third arc stretches over the first two arcs in the diagram, i.e., $a_3=1$.

The effect of an adjacent transposition of the facets of the boundary of the $n$-cube, i.e., in the standard permutation,  on the associated arc diagram is exactly one swap of adjacent ends of two distinct arcs.  Namely, an interchange of two left ends, or an interchange of one left end and one right end.

If $\pi(a_1\ldots a_n)$ and $\pi(b_1\ldots b_n)$ differ by an adjacent
transposition, then there exists a unique $i$ such that $b_i=a_i \pm 1$
and $b_j=a_j$ for all $j \neq i$.  The converse is not true.  For
example, the word $122$ is obtained from the word $112$ by changing the
second letter by $1$, but the standard permutations $\pi(112)$ and
$\pi(122)$ differ by more than just a single adjacent transposition, see
Figure~\ref{fig:arct01}. 

\begin{figure}[h]
\input{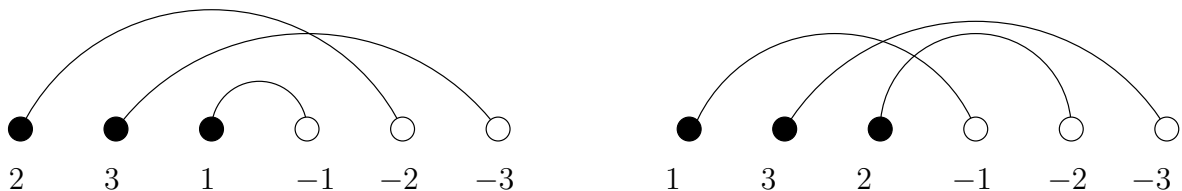}
\caption{The arc diagrams associated to $\pi(112)$ and $\pi(122)$}
\label{fig:arct01}
\end{figure}
  
\section{A Gray Code for all Standard Permutations}
\label{sec:gray}

In this section, we define a Gray code for the standard
permutations of the set $\{\pm 1,\ldots,\pm n\}$ for each $n\geq 1$. We
will write our permutations in the form $\pi(a_1\ldots a_n)$, using the encoding
introduced in Section~\ref{sec:arc}. To simplify our notation, we omit
the operator $\pi$ and write $a_1\cdots a_n$ instead of $\pi(a_1\ldots
a_n)$ in our lists.  We will refer to this simplified way of writing
our Gray code as {\em a Gray code for words}. 

We define our Gray code for words recursively as follows. For $n=1$, we
have only one word to list, namely $1$. To write the Gray code for words
of length $n$, start with $11\ldots 1$. 
Increase $a_n$ by $1$ to get the next word: $11\ldots 12$.  Continue to
increase $a_n$ by $1$ to get a list of codes.  Stop increasing $a_n$
once $a_n=2n-1$ (recall $1\le a_i \le 2i-1$ for any $i$). Now replace
$a_1 \ldots  a_{n-1}$ with the next word in the Gray code for words of
length $n-1$ to get $11\ldots 12(2n-1)$.  Decrease $a_n$ by $1$ to
get the next string of codes. When $a_n=1$, replace $a_1 \ldots
a_{n-1}$ with the next word in the Gray code of length $n-1$.  Continue
in this fashion until the Gray code terminates with $135\ldots (2n-1)$.
See Table~\ref{tab:GC} for the Gray code when $n=2$ and $n=3$. 

\begin{table}[h]
\caption{The Gray Code (read down)}
\label{tab:GC}
\begin{tabular}{ccccc}
\hline
$n=2$ &      &  $n=3$ &   &  \\
\hline
11  &       & 111 & 125 & 131 \\
12  &       & 112 & 124 & 132 \\
13  &       & 113 & 123 & 133 \\
      &       & 114 & 122 & 134 \\
      &       & 115 & 121 & 135 \\
\hline
\end{tabular}
\end{table}

\begin{theorem}
\label{thm:fullgc}
The enumeration defined above is an adjacent transposition Gray code for
the standard permutations of the set $\{\pm1,\ldots,\pm n\}$.
\end{theorem}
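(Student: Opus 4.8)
The statement has two parts: (1) the listing defined recursively is actually a valid listing of \emph{all} standard permutations of $\{\pm1,\dots,\pm n\}$, each appearing exactly once; and (2) consecutive entries in the list differ by a single adjacent transposition of the underlying signed permutation. Part (1) is essentially bookkeeping about the word encoding $a_1\cdots a_n$ with $1\le a_i\le 2i-1$: the recursion runs through every word in the Gray code for length $n-1$ exactly once, and for each such word it runs $a_n$ through all of $1,2,\dots,2n-1$ (alternately up and down), so every length-$n$ word $a_1\cdots a_n$ occurs exactly once. Since $a_1\cdots a_n\mapsto\pi(a_1\cdots a_n)$ is a bijection between these words and standard permutations (established in Section~\ref{sec:arc}), the list hits each standard permutation once. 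I would dispatch this quickly by induction on $n$.

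The real content is part (2). Consecutive words in the list differ in exactly one coordinate, and there by $\pm1$: either $a_n$ changes by $1$ (with $a_1\cdots a_{n-1}$ fixed), or $a_n$ is at an extreme value ($1$ or $2n-1$) and $a_1\cdots a_{n-1}$ advances by one adjacent transposition in the length-$(n-1)$ code while $a_n$ stays put. So the proof reduces to two lemmas about the arc-diagram encoding:

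\textbf{(a)} If $b_1\cdots b_n$ is obtained from $a_1\cdots a_n$ by changing $a_n$ to $a_n\pm1$ (all other letters equal), then $\pi(b_1\cdots b_n)$ is obtained from $\pi(a_1\cdots a_n)$ by a single adjacent transposition. This is clear from the arc-diagram picture: $a_1\cdots a_{n-1}$ encodes the diagram on the first $2n-2$ vertices, and $a_n$ is the position of the left endpoint of the $n$-th arc (its right endpoint being the rightmost vertex, position $2n$, labeled $-n$). Moving $a_n$ by $1$ slides that left endpoint past exactly one neighboring vertex, i.e.\ swaps two adjacent entries of the permutation — the entry $n$ and whatever currently sits next to it. One must check that the neighbor is a genuine single vertex (never ``off the end''), which holds because $1\le a_n\le 2n-1$ and the right endpoint is at $2n$, so moving $a_n$ up or down within range always crosses exactly one vertex.

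\textbf{(b)} If $b_1\cdots b_{n-1}$ is obtained from $a_1\cdots a_{n-1}$ by one adjacent transposition in the length-$(n-1)$ code (so, by induction, $\pi(b_1\cdots b_{n-1})$ and $\pi(a_1\cdots a_{n-1})$ differ by an adjacent transposition at some position $p\in\{1,\dots,2n-3\}$), then for a \emph{fixed} value $c=a_n=b_n$ equal to $1$ or $2n-1$, the permutations $\pi(a_1\cdots a_{n-1}c)$ and $\pi(b_1\cdots b_{n-1}c)$ differ by an adjacent transposition. The subtlety — and this is where I expect the main obstacle — is that inserting the $n$-th arc with left endpoint at position $c$ shifts the labels of the first $2n-2$ vertices and can interfere with the swap at position $p$; an adjacent transposition at positions $(p,p{+}1)$ in the smaller diagram might become a \emph{non-adjacent} transposition after the insertion if the new vertex $n$ is inserted strictly between positions $p$ and $p+1$. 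This is exactly why the construction only advances the smaller code when $a_n$ is at the boundary value $1$ or $2n-1$: if $c=1$ the left endpoint of the $n$-th arc sits at the very front (its right endpoint at the very back, position $2n$), so the insertion lands before all of the first $2n-2$ vertices (or the two new vertices straddle the whole old diagram), and the swap at $(p,p{+}1)$ survives as a swap at $(p{+}1,p{+}2)$; similarly if $c=2n-1$ the $n$-th arc is a minimal arc at the rightmost two positions and the old diagram is left intact with the swap at $(p,p{+}1)$ unchanged. I would make this precise by describing explicitly, for $c\in\{1,2n-1\}$, how the positions of the $2n-2$ old vertices embed into $\{1,\dots,2n\}$, and observing this embedding is order-preserving with image a contiguous block — hence adjacency of a transposition is preserved. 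The bulk of the write-up is verifying this insertion-compatibility cleanly; the rest is routine induction.
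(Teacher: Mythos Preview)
Your proposal is correct and follows essentially the same approach as the paper: induction on $n$ with the same two cases (either $a_n$ moves by one, sliding the left endpoint of the $n$-th arc past a single neighbor, or $a_n\in\{1,2n-1\}$ and the inner word advances, where the $n$-th arc either straddles the whole old diagram or sits as a minimal arc at the right so adjacency of the inherited swap is preserved). Your treatment is slightly more explicit than the paper's---you spell out the order-preserving, contiguous embedding of the old $2n-2$ positions into $\{1,\dots,2n\}$ for $c\in\{1,2n-1\}$, and you separately note that the listing exhausts all words---but the underlying argument is the same.
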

\begin{proof}
We proceed by induction on $n$. Comparing two consecutive words
$a_1\ldots a_n$ and $b_1\ldots b_n$, they
differ in exactly one letter. We distinguish two cases, depending on
whether this letter is the last letter of the word or some other letter.

{\noindent\it Case 1:} $a_n\neq b_n$. In this case, the associated standard
permutations differ by an adjacent transposition since the first $n-1$
arcs in the two associated arc diagrams remain stationary, and the
$n^{th}$ arc moves to the right or to the left by one vertex position.
This move switches the left end of the $n^{th}$ arc with an adjacent end
of a different arc. As mentioned at the end of Section~\ref{sec:arc},
switching adjacent ends of two arcs corresponds to an adjacent
transposition on the associated  signed permutation. 

{\noindent\it Case 2:} $a_n=b_n$. This case occurs only if $a_n=1$,
corresponding 
to the $n^{th}$ arc stretching over the first $n-1$ arcs; or if
$a_n=2n-1$, which means the $n^{th}$ arc is a minimal arc.  Thus, in
either situation, the $n^{th}$ arc will not affect whether or not the
move produces an adjacent transposition in the associated standard
permutations.  By the recursive definition of the Gray code, we replace
$a_1 \ldots  a_{n-1}$ with the next word in the Gray code for words of
length $n-1$. By our induction hypothesis, consecutive words in the Gray
code for words of length $n-1$ correspond to standard permutations on
$\{\pm 1,\ldots ,\pm(n-1)\}$ which differ by an adjacent transposition.   
\end{proof}

\section{Properties of the full Gray Code}
\label{sec:prop}

The Gray code for the equivalence classes of the enumerations of the
facets of the boundary of the $n$-cube, referred to by their standard
permutations, will be referred to as the {\em full Gray code}.  As seen
in Section~\ref{sec:gray}, this Gray code may be defined in terms of a
list of standard permutations in the form $\pi(a_1 \ldots a_n)$ for each
word $\underline{a}=a_1\ldots a_n$.  We introduce the notation
$\tau(\underline{a})=a_1\ldots a_{n-1}$ to represent the truncated word,
obtained by removing the last letter of $\underline{a}$. 

\begin{definition}
The collection of all words $\underline{a}$ such that $\tau (\underline{a})=a_1\cdots a_{n-1}$ is fixed will be referred to as a run in the Gray code for words.
\end{definition}

All words in a run form a sublist of codes corresponding to standard permutations differing by an adjacent transposition.  In the run, only $a_n$ changes, either increasing from $1$ to $2n-1$ or decreasing from $2n-1$ to $1$.
A run is increasing if $a_n$ increases to get each subsequent word in the run.  If an increasing run is the $k^{th}$ run in the Gray code, then $k \equiv 1$ mod 2.  Hence, increasing runs will also be referred to as odd runs.  
A run is decreasing if $a_n$ decreases to get each subsequent code in the run.  If a decreasing run is the $k^{th}$ run in the Gray code, then $k \equiv 0$ mod 2.  Decreasing runs will also be referred to as even runs.
Note, odd and even refer to the count of the run and not whether $a_{n-1}$ is odd or even.  For example, $1111$ through $1117$ is the first run in the Gray code for words of length $4$, and $a_3=1$.  However, $12561$ through $12569$ is the $37^{th}$ run in the Gray code for words of length $5$, and $a_4=6$. 

Several properties of the Gray code follow directly from its definition:
\begin{enumerate}
\item  Suppose $\pi(\underline{b})=\pi(b_1\ldots b_n)$ immediately
  follows $\pi(\underline{a})=\pi(a_1\ldots a_n)$ in the Gray code and
  $\underline{a}$ and $\underline{b}$ are in different runs. If
  $\underline{a}$ is in an odd (increasing) run, then $\underline{b}$ is
  in an even (decreasing) run and $a_n=b_n=2n-1$.  If $\underline{a}$ is
  in an even (decreasing) run, then $\underline{b}$ is in an odd
  (increasing) run and $a_n=b_n=1$. 
\item  In every run, there is at least one arc-disconnected word (when $a_n=2n-1$) and at least two arc-connected words (when $a_n=1$ or $2$).
\item  There are $(2n-3)!!$ runs in the Gray code where each word that encodes a standard permutation has length $n$.
\end{enumerate}


\begin{lemma}
\label{fact-m}
Suppose $\pi(\underline{a})=\pi(a_1 \ldots a_n)$ is the $m^{th}$ standard permutation in the Gray code.  
Then $(n-1)+\sum_{i=1}^{n} a_i \equiv m$ mod 2. 
\end{lemma}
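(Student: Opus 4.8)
The plan is to prove this by induction on $n$, tracking the parity of the position index $m$ as we walk through the Gray code for words. The base case $n=1$ is immediate: the only word is $\underline{a}=1$, it sits at position $m=1$, and $(n-1)+\sum a_i = 0 + 1 = 1 \equiv 1 \pmod 2$, so the congruence holds.

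For the inductive step, I would exploit the recursive structure of the Gray code described in Section~\ref{sec:gray}. Each word of length $n$ lies in a run determined by its truncated word $\tau(\underline{a})=a_1\cdots a_{n-1}$, and the runs appear in the order inherited from the Gray code for words of length $n-1$. The $k$-th run consists of $2n-1$ words obtained by letting $a_n$ sweep from $1$ to $2n-1$ (if $k$ is odd) or from $2n-1$ down to $1$ (if $k$ is even). So if $\tau(\underline{a})$ is the $k$-th word in the length-$(n-1)$ Gray code, it occupies positions $(k-1)(2n-1)+1$ through $k(2n-1)$ in the length-$n$ Gray code; within its run, $\underline{a}$ is at some local index, and a short computation gives $m$ in terms of $k$, $n$, and $a_n$. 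Concretely, in an odd run $m = (k-1)(2n-1) + a_n$, and in an even run $m = (k-1)(2n-1) + (2n-1) - a_n + 1 = k(2n-1) - a_n + 1$.

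Now I apply the induction hypothesis to $\tau(\underline{a})$: since $\tau(\underline{a})$ is the $k$-th word of length $n-1$, we have $(n-2)+\sum_{i=1}^{n-1} a_i \equiv k \pmod 2$, i.e., $k \equiv n + \sum_{i=1}^{n-1} a_i \pmod 2$. I then substitute this into the two formulas for $m$ above and reduce mod $2$. For the odd-run case, $m \equiv (k-1)(2n-1) + a_n \equiv (k-1) + a_n \pmod 2$ since $2n-1$ is odd; plugging in $k \equiv n + \sum_{i<n} a_i$ gives $m \equiv n - 1 + \sum_{i<n} a_i + a_n = (n-1) + \sum_{i=1}^n a_i \pmod 2$, exactly what we want. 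For the even-run case, $m \equiv k(2n-1) - a_n + 1 \equiv k + a_n + 1 \pmod 2$; but an even run means $k$ is even, so this is $\equiv a_n + 1 \pmod 2$, and separately $k$ even together with $k \equiv n + \sum_{i<n} a_i$ forces $n + \sum_{i<n} a_i$ even, hence $(n-1) + \sum_{i=1}^n a_i \equiv (n-1) + (\,n + \text{even}\,) + a_n \equiv 2n - 1 + a_n \equiv a_n + 1 \pmod 2$; the two match.

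The only real subtlety — and the step I would be most careful about — is getting the within-run position formula exactly right for even (decreasing) runs, since the off-by-one there is easy to mishandle: one must check that the first word of an even run has $a_n = 2n-1$ and sits at the smallest position of that block, so its local offset is $(2n-1) - a_n + 1 = 1$, and the last word has $a_n = 1$ at local offset $2n-1$. I would verify these formulas against the $n=3$ data in Table~\ref{tab:GC} (for instance, $125$ is the first word of the $7$th... actually the run headed by $\tau=12$, which is decreasing, and indeed $a_3$ runs $5,4,3,2,1$) before trusting them. Everything else is routine parity arithmetic using only that $2n-1$ is odd and that "$k$ even $\iff$ even run."
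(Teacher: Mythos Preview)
Your proof is correct, but it takes a genuinely different route from the paper's. The paper argues by induction on $m$: the first word $11\cdots 1$ has $(n-1)+\sum a_i = 2n-1 \equiv 1$, and any two consecutive words in the Gray code differ in exactly one letter by $\pm 1$, so the parity of $(n-1)+\sum a_i$ flips each time $m$ increments. That is the entire argument---essentially the generic observation that in any ``reflected'' Gray code where successive words differ by $\pm 1$ in one coordinate, the coordinate sum alternates parity with the position.

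Your approach instead inducts on $n$ and exploits the run structure, deriving an explicit formula for $m$ in terms of the run index $k$ and $a_n$, then invoking the induction hypothesis for $\tau(\underline{a})$. This is more work, but it buys something the paper's proof does not: an exact position formula $m=(k-1)(2n-1)+a_n$ (odd run) or $m=k(2n-1)-a_n+1$ (even run), which could be independently useful. One small simplification: in your even-run case you did not actually need to invoke ``$k$ even'' separately---substituting $k\equiv n+\sum_{i<n}a_i$ directly into $m\equiv k+a_n+1$ gives $m\equiv n+1+\sum_{i=1}^n a_i\equiv (n-1)+\sum_{i=1}^n a_i$, matching the odd-run computation without the extra step.
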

\begin{proof}
We will use induction on $m$. For $m=1$, $a_1=a_2=\cdots=a_{n}=1$ and 
$(n-1)+\sum_{i=1}^{n} a_i =2n-1 \equiv 1 =m$ mod 2. Assume the statement is true for the $m^{th}$ standard permutation $\pi(\underline{a})$, and let $\underline{b}$ be the word which encodes the $(m+1)^{st}$ standard permutation in the Gray code. As noted above, there exists a unique $i$ such that $b_i=a_i\pm 1$ and $b_j=a_j$ for all $j\neq i$. Hence the parity of $(n-1)+\sum_{i=1}^{n} b_i$ is the opposite of the parity of $(n-1)+\sum_{i=1}^{n} a_i$, that is $(n-1)+\sum_{i=1}^{n} b_i \equiv m+1$ mod 2. 
\end{proof}

\begin{corollary}
\label{run-parity}
If $\underline{a}$ is in an increasing run, then $a_1+a_2+\ldots +a_{n-1}+(n-2)\equiv 1$ mod 2, and if $\underline{a}$ is in a decreasing run, then $a_1+a_2+\ldots +a_{n-1}+(n-2)\equiv 0$ mod 2.
\end{corollary}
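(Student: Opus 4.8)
The plan is to reduce the statement to Lemma~\ref{fact-m} applied in dimension $n-1$. The crucial structural observation is that the recursive definition of the Gray code for words of length $n$ builds it from the Gray code for words of length $n-1$ by replacing the $k^{\text{th}}$ word $w=w_1\cdots w_{n-1}$ of the shorter code by an entire run, i.e.\ a block of $2n-1$ consecutive words, each of whose truncation equals $w$ (only the last letter $a_n$ varies, running from $1$ to $2n-1$ or back). Hence the $k^{\text{th}}$ run of the length-$n$ code has truncation exactly equal to the $k^{\text{th}}$ word of the length-$(n-1)$ code. Consequently, if $\underline{a}=a_1\cdots a_n$ lies in the $k^{\text{th}}$ run of the length-$n$ Gray code, then $\tau(\underline{a})=a_1\cdots a_{n-1}$ is precisely the $k^{\text{th}}$ word in the Gray code for words of length $n-1$. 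I would state this correspondence between runs of the length-$n$ code and words of the length-$(n-1)$ code cleanly at the outset, since essentially all the content of the corollary sits there.

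Next I would invoke Lemma~\ref{fact-m} for the Gray code on $\{\pm1,\ldots,\pm(n-1)\}$: since $a_1\cdots a_{n-1}$ is its $k^{\text{th}}$ word, we obtain $((n-1)-1)+\sum_{i=1}^{n-1}a_i=(n-2)+\sum_{i=1}^{n-1}a_i\equiv k \pmod 2$. Then I would combine this with the parity bookkeeping already recorded after the definition of a run: increasing runs are exactly the odd-indexed runs (so $k\equiv 1\pmod 2$) and decreasing runs are exactly the even-indexed runs (so $k\equiv 0\pmod 2$). Substituting into the congruence above yields $a_1+\cdots+a_{n-1}+(n-2)\equiv 1\pmod 2$ when $\underline{a}$ is in an increasing run and $\equiv 0\pmod 2$ when it is in a decreasing run, which is exactly the claim.

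I do not expect a genuine obstacle. The only mildly delicate point is the base case $n=1$, where $\tau(\underline{a})$ is the empty word and Lemma~\ref{fact-m} is not literally applicable; but there the Gray code has a single word, forming the first (hence ``increasing/odd'') run, and $a_1+\cdots+a_{n-1}+(n-2)$ is the empty sum plus $-1\equiv 1\pmod 2$, so the statement holds trivially. Thus one may either treat $n\ge 2$ via the argument above and dispatch $n=1$ by inspection, or simply note that the run/word correspondence and Lemma~\ref{fact-m} are being used only for $n-1\ge 1$.
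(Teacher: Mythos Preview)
Your argument is correct and is precisely the intended derivation: the paper states this as an immediate corollary of Lemma~\ref{fact-m} without proof, and the natural reading is exactly what you wrote---apply Lemma~\ref{fact-m} to the truncated word $\tau(\underline{a})$ viewed as the $k^{\text{th}}$ entry of the length-$(n-1)$ Gray code, then use that increasing/decreasing runs are odd-/even-indexed. Your handling of the degenerate case $n=1$ is also fine and not something the paper addresses.
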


\begin{lemma}
\label{fact6}
Suppose that $\pi(\underline{a})=\pi(a_1\ldots a_n)$ is immediately
followed by $\pi(\underline{b})=\pi(b_1\ldots b_n)$ in the full Gray
code. Let $i$ be the unique index such that $b_i=a_i \pm 1$
and $b_j=a_j$ for all $j \neq i$.  Then if $i<n$, either $a_k=1$ for all
$k>i$ or $a_k=2k-1$ for all $k>i$. 
\end{lemma}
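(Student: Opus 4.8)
The plan is to argue by induction on $n$, exploiting the recursive construction of the full Gray code. For $n=1$ there is a single word and nothing to prove. For the inductive step, suppose $\pi(\underline{a})$ is immediately followed by $\pi(\underline{b})$ in the Gray code for words of length $n$, differing at the unique index $i<n$; then $b_j=a_j$ for all $j\neq i$, so in particular $a_n=b_n$. Since within a single run only the last letter changes, and it changes by $\pm1$ at every step, two consecutive words that agree in their last letter cannot lie in the same run. Hence $\underline{a}$ is the last word of its run and $\underline{b}$ is the first word of the next run; by the first of the three properties listed just before Lemma~\ref{fact-m} this forces $a_n=b_n\in\{1,2n-1\}$. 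Moreover, since by construction the truncations of the successive runs at level $n$ run through the Gray code for words of length $n-1$ in order, the words $\tau(\underline{a})=a_1\cdots a_{n-1}$ and $\tau(\underline{b})=b_1\cdots b_{n-1}$ are consecutive in the length-$(n-1)$ Gray code and, like $\underline{a}$ and $\underline{b}$, differ exactly at index $i$.

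If $i=n-1$, then the only index exceeding $i$ is $k=n$, and $a_n\in\{1,2n-1\}$ already gives one of the two alternatives of the statement. Assume now $i<n-1$. Then $\tau(\underline{a})$ and $\tau(\underline{b})$ differ at an index strictly below their common length $n-1$, so the inductive hypothesis, applied to the length-$(n-1)$ Gray code, tells us that either $a_k=1$ for all $k$ with $i<k\le n-1$, or $a_k=2k-1$ for all such $k$. What is left is the one genuinely delicate point, and the main obstacle: to show that the value $a_n$, which we already know lies in $\{1,2n-1\}$, equals $1$ in the first alternative and $2n-1$ in the second, so that the alternative extends cleanly to $k=n$.

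The tool for this is Corollary~\ref{run-parity}, read as an equivalence (it is one, since each word lies in exactly one run, which is either increasing or decreasing, and the two parities appearing in the corollary are complementary). Because $\tau(\underline{a})$ and $\tau(\underline{b})$ lie in different runs at level $n-1$, the word $\tau(\underline{a})$ ends a run at level $n-1$; the last word of such a run has last letter $a_{n-1}=1$ if the run is decreasing and $a_{n-1}=2(n-1)-1=2n-3$ if it is increasing — these are precisely the two values supplied by the inductive hypothesis for $k=n-1$, and both are odd. Now the run-parity quantity at level $n$ is $a_1+\cdots+a_{n-1}+(n-2)$, while at level $n-1$ it is $a_1+\cdots+a_{n-2}+(n-3)$; these differ by $a_{n-1}+1$, which is even. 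Hence $\underline{a}$ lies in an increasing run at level $n$ if and only if $\tau(\underline{a})$ lies in an increasing run at level $n-1$. Combining: in the first alternative $a_{n-1}=1$, so $\tau(\underline{a})$ ends a decreasing run at level $n-1$, hence $\underline{a}$ ends a decreasing run at level $n$ and $a_n=1$; in the second alternative $a_{n-1}=2n-3$, so $\tau(\underline{a})$ ends an increasing run, hence $\underline{a}$ ends an increasing run and $a_n=2n-1$. In each case $a_n$ matches the pattern, completing the induction. As anticipated, the parity bookkeeping of this last paragraph — preventing the recursion from mixing the two alternatives — is the only real work; the rest follows directly from the definition of the Gray code and its run structure.
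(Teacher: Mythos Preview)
Your proof is correct and follows essentially the same approach as the paper's: both arguments exploit the recursive structure of the Gray code to pass to level $n-1$ and then use the parity observation of Corollary~\ref{run-parity} to rule out a mismatch between $a_{n-1}$ and $a_n$. The only difference is packaging---you run an explicit induction extending the pattern from $k\le n-1$ to $k=n$, whereas the paper argues by contradiction at a hypothetical least switch point---but the computational core (the comparison of the run-parities at levels $n$ and $n-1$, which differ by the even number $a_{n-1}+1$) is identical.
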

\begin{proof}  
By the recursive nature of the Gray code, for
each $k>i$ we must have either $a_k=1$ or $a_k=2k-1$ .  In particular,
$a_n=1$ or $2n-1$ if $i<n$.  We would like to show that in a single word
we cannot have both $a_k=1$ and $a_j=2j-1$ for some $k,j>i$. Assume by
way of contradiction, that there is such a change, then there is a least
$k>i$ such that exactly one of $a_k$ and $a_{k+1}$ is equal to one. By
the recursive nature of our Gray code we may assume $k=n-1$, since
$a_j=b_j$ for $j\geq k+2$ implies that $b_1\ldots b_{k+1}$ immediately
follows $a_1\dots a_{k+1}$ in the full Gray code of words.  
We will show by way of contradiction the impossibility of the case when
$a_n=2n-1$ and $a_{n-1}=1$. The case when $a_{n-1}=2n-3$ and $a_n=1$ is
completely analogous. 

Since $a_n=b_n=2n-1$, $\underline{a}$ is in an increasing run.  By
Corollary~\ref{run-parity}, $a_1+\ldots +a_{n-1}+(n-2) \equiv 1$ mod 2.
Thus, $a_1+\ldots +a_{n-2}+(n-3) \equiv 1$ mod 2, which means
$\tau(\underline{a})$ is in an increasing run in the Gray code for words
of length $n-1$.  Since $a_{n-1}=1$, we must have $b_{n-1}=2$, in
contradiction with $i<n-1$.
\end{proof}

\begin{corollary}
\label{parity}
If $\underline{a}=a_1\ldots a_k 1\ldots 1$ and the next word in the Gray
code for words is $a_1\ldots a'_k 1\ldots 1$, then $a'_k=a_k+1$ implies
that $a_k$ is even, and $a'_k=a_k-1$ implies that $a_k$ is odd. 
Similarly, if $\underline{a}=a_1\ldots a_k (2k+1)\ldots (2n-1)$ and the
next word in the Gray code for words is $a_1\ldots a'_k (2k+1)\ldots
(2n-1)$, then $a'_k=a_k+1$ implies that $a_k$ is odd, and $a'_k=a_k-1$
implies that $a_k$ is even. 
\end{corollary}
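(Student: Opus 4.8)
The plan is to derive both halves of the corollary directly from Corollary~\ref{run-parity}, using the structural description of runs together with Lemma~\ref{fact6} to pin down exactly when a single-letter change at position $k$ occurs with the tail frozen at $1\ldots 1$ (or at $(2k+1)\ldots(2n-1)$). First I would treat the case $\underline{a}=a_1\ldots a_k 1\ldots 1$. Here the last letter $a_n=1$, so $\underline{a}$ sits in a decreasing (even) run, and by Corollary~\ref{run-parity} we have $a_1+\cdots+a_{n-1}+(n-2)\equiv 0$ mod $2$. Since the tail $a_{k+1}=\cdots=a_{n-1}=1$ contributes $n-1-k$ ones, this forces $a_1+\cdots+a_k+(k-2)\equiv 0$ mod $2$ after a parity bookkeeping step. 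By Lemma~\ref{fact6} (applied with the change index equal to $k<n$, and using its conclusion that the frozen tail is all $1$'s, which is the hypothesis here), passing to the word $a_1\ldots a_{k-1}$ of length $k-1$: the recursive nature of the Gray code means $a_1\ldots a_k$ is immediately followed by $a_1\ldots a'_k$ in the Gray code for words of length $k$, and the relation $a_1+\cdots+a_{k-1}+(k-2)\equiv 0$ says (by Corollary~\ref{run-parity} applied at level $k$) that $a_1\ldots a_{k-1}$ determines a decreasing run at level $k$, i.e.\ $a_k$ is moving toward $1$ when the next step at position $k$ is a decrease and toward $2k-1$ when it is an increase. Unwinding the definition of a decreasing run: if $a'_k=a_k-1$ we are in a decreasing run of the level-$k$ code and the parity constraint $a_1+\cdots+a_{k-1}+(k-2)\equiv 0$ combined with $a_1+\cdots+a_k+(k-2)\equiv 0$ gives $a_k\equiv 0$... — wait, I would instead argue it the cleaner way below.

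More cleanly: from $a_1+\cdots+a_k+(k-2)\equiv 0$ and the fact that $a_1\ldots a_{k-1}$ lies in some run of the level-$k$ Gray code, I use Corollary~\ref{run-parity} \emph{at level $k$}, which says $a_1+\cdots+a_{k-1}+(k-2)$ is odd in an increasing run and even in a decreasing run. Subtracting, $a_k \equiv 1$ mod $2$ in an increasing run and $a_k \equiv 0$ mod $2$ in a decreasing run. But in an increasing run at level $k$ the letter $a_k$ increases to reach the next word (so $a'_k=a_k+1$), and in a decreasing run it decreases (so $a'_k=a_k-1$) — and these are exactly the two sub-cases, since when the tail is frozen at all $1$'s the next word in the full Gray code changes position $k$ in lockstep with the next word in the level-$k$ code. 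Hence $a'_k=a_k+1$ forces $a_k$ odd and $a'_k=a_k-1$ forces $a_k$ even. This is the opposite of what the corollary asserts, so I must be careful about which parity of run corresponds to which direction of motion of $a_k$; the point I expect to have to nail down is precisely this sign, reconciling ``odd run of the level-$k$ code means $a_k$ increases'' against the global parity. Re-examining the base of the recursion ($n=1$, word $1$; $n=2$, words $11,12,13$) will fix the convention, and I believe the correct reading makes $a'_k=a_k+1$ force $a_k$ even, matching the statement. The second half, with $\underline{a}=a_1\ldots a_k(2k+1)\ldots(2n-1)$, is handled by the same argument with all roles of ``increasing'' and ``decreasing'' swapped: now $a_n=2n-1$ so $\underline{a}$ is in an increasing run, Corollary~\ref{run-parity} gives $a_1+\cdots+a_{n-1}+(n-2)\equiv 1$, the frozen tail $(2k+1)\ldots(2n-1)$ contributes a computable parity, and unwinding yields that $a'_k=a_k+1$ forces $a_k$ odd while $a'_k=a_k-1$ forces $a_k$ even.

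The main obstacle, as flagged, is bookkeeping the parity of the frozen tail correctly and reconciling the direction of motion of $a_k$ with the increasing/decreasing status of the run at level $k$ — a single sign error propagates to the wrong conclusion. I would resolve this once and for all by checking the $n=3$ table (Table~\ref{tab:GC}): e.g.\ the transition $125 \to 124$ has $k=3$, $a_3=5=2\cdot3-1$, so this is the $a_n=2n-1$ frozen-tail case with $a'_3=a_3-1$ and $a_3=5$ odd — matching ``$a'_k=a_k-1$ implies $a_k$ even''? No: $5$ is odd, so this matches ``$a'_k=a_k-1$ implies $a_k$ odd''?? This small discrepancy tells me the corollary's ``second half'' is stated for changes at an \emph{internal} position $k<n$ with a nonempty suffix $(2k+1)\ldots(2n-1)$, not for $k=n$ itself, so the honest check is a transition like $111\to\cdots$ or the between-run transitions; I would locate the genuine internal-$k$ transitions in the length-$4$ or length-$5$ code, verify the claimed parities there, and then present the general argument in the streamlined form above with the verified sign convention. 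Once the convention is fixed, the proof is a two-line application of Corollary~\ref{run-parity} at two consecutive levels plus Lemma~\ref{fact6}.
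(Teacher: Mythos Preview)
Your plan is the intended one: the paper offers no explicit proof, and the corollary is meant to fall out of Corollary~\ref{run-parity} applied once at level $n$ (to fix the parity of $a_1+\cdots+a_{n-1}$) and once at level $k$ (to tie the direction of $a_k$'s motion to the parity of $a_1+\cdots+a_{k-1}$), together with the recursive construction of the code, which guarantees that a forward step across a run boundary at level $n$ with a frozen tail descends to a forward within-run step at level $k$.

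The gap is a single arithmetic slip that causes all of your subsequent confusion. From $a_1+\cdots+a_{n-1}+(n-2)\equiv 0$ and $a_{k+1}=\cdots=a_{n-1}=1$ one gets
\[
a_1+\cdots+a_k+(n-1-k)+(n-2)\equiv 0,\qquad\text{so}\quad a_1+\cdots+a_k+(k-2)\equiv 1\pmod 2,
\]
not $0$ as you wrote (the difference $(2n-3-k)-(k-2)=2n-1-2k$ is odd). With this correction your ``cleaner'' paragraph already finishes the proof: subtracting the level-$k$ run-parity relation gives $a_k\equiv 0$ when the level-$k$ run is increasing and $a_k\equiv 1$ when it is decreasing, i.e.\ $a'_k=a_k+1\Rightarrow a_k$ even and $a'_k=a_k-1\Rightarrow a_k$ odd, exactly as claimed. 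For the second half, $\underline{a}$ now ends an \emph{increasing} run, so $a_1+\cdots+a_{n-1}+(n-2)\equiv 1$; the tail $(2k+1),\ldots,(2n-3)$ again has parity $n-1-k$, whence $a_1+\cdots+a_k+(k-2)\equiv 0$ and the conclusions flip as stated. Two small points you should make explicit in a clean write-up: the hypothesis forces $k<n$ (otherwise the statement is false), so $a_n=b_n$ really does place $\underline{a}$ at the end of a run; and your example $125\to124$ is irrelevant because there $k=n=3$ and the suffix is empty.
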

Lemma~\ref{fact-m} has the following consequence.
\begin{corollary}
\label{parity-truncated}
Under the conditions of Lemma~\ref{fact6}, if $\underline{a}=a_1\ldots
a_k 1\ldots 1$, then $a_1+\ldots +a_k+(k-1)=0$ mod 2;  
and if $\underline{a}=a_1\ldots a_k (2k+1)\ldots (2n-1)$, then
$a_1+\ldots +a_k+(k-1)=1$ mod 2. 
\end{corollary}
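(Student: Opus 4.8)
The plan is to reduce both congruences to Corollary~\ref{run-parity} after a short parity computation on the trailing letters of $\underline{a}$. The key elementary fact is that if $a_{k+1},a_{k+2},\ldots,a_{n-1}$ are all odd, then
\[
a_1+\cdots+a_k+(k-1)\equiv a_1+\cdots+a_{n-1}+(n-2)\pmod{2},
\]
because the difference of the right and left sides equals $\sum_{j=k+1}^{n-1}a_j+\big((n-2)-(k-1)\big)\equiv (n-1-k)+(n-1-k)\equiv 0\pmod{2}$. In both situations of the corollary this hypothesis holds: when $\underline{a}=a_1\ldots a_k1\ldots1$ the trailing letters are all equal to $1$, and when $\underline{a}=a_1\ldots a_k(2k+1)\ldots(2n-1)$ the letter in position $j>k$ equals $2j-1$; in either case they are odd.

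Next I would locate $\underline{a}$ within the run structure of the Gray code. Under the conditions of Lemma~\ref{fact6} with $i<n$, the successor $\underline{b}$ of $\underline{a}$ has $\tau(\underline{b})\neq\tau(\underline{a})$; since within a single run only the last letter changes, this forces $\underline{a}$ to be the \emph{last} word of its run and $\underline{b}$ the first word of the next run. The last word of an increasing run has last letter $2n-1$ and the last word of a decreasing run has last letter $1$, so $\underline{a}=a_1\ldots a_k1\ldots1$ must lie in a decreasing run, whereas $\underline{a}=a_1\ldots a_k(2k+1)\ldots(2n-1)$ must lie in an increasing run.

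Finally I would invoke Corollary~\ref{run-parity}: in the decreasing case $a_1+\cdots+a_{n-1}+(n-2)\equiv 0\pmod{2}$, and in the increasing case $a_1+\cdots+a_{n-1}+(n-2)\equiv 1\pmod{2}$. Combining each of these with the identity from the first step yields $a_1+\cdots+a_k+(k-1)\equiv 0\pmod{2}$ in the first case and $\equiv 1\pmod{2}$ in the second, which is exactly the assertion. The only step that demands genuine care is the middle one — deducing from the recursive definition of the Gray code that $\underline{a}$ is the terminal word of its run, and that the run's direction is recovered from $a_n$ — but this is routine in light of the structural remarks already recorded at the start of Section~\ref{sec:prop}, so I do not anticipate a real obstacle.
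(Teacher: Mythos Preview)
Your argument is correct and is exactly the derivation the paper has in mind: the paper states the corollary simply as a consequence of Lemma~\ref{fact-m}, and your route through Corollary~\ref{run-parity} (itself an immediate corollary of Lemma~\ref{fact-m}) together with the parity reduction on the odd trailing letters is precisely how one unpacks that. The one point you flag as needing care---that $i<n$ forces $\underline{a}$ to be terminal in its run, so that $a_n\in\{1,2n-1\}$ determines the run direction---is indeed routine from the properties listed at the start of Section~\ref{sec:prop}.
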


\begin{lemma}
\label{fact8:prelim}
In each run in the Gray code for words of length $n$, there exists a $k$
such that if $a_n \le 2k-2$, $\underline{a}$ is arc-connected; and if
$a_n \ge 2k-1$, $\underline{a}$ is arc-disconnected. This $k$ is the
least index $k'\leq n-1$ such that $a_j\geq 2k'-1$ holds for all $j\in \{k',
k'+1, \ldots,n-1\}$, if such an index exists; otherwise $k=n$. 
\end{lemma}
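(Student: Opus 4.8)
The plan is to analyze a single run, i.e., a maximal block of words sharing a common truncation $\tau(\underline{a}) = a_1 \cdots a_{n-1}$, and to determine exactly when varying the last letter $a_n$ from $1$ to $2n-1$ produces an arc-connected versus arc-disconnected word. Throughout the run the first $n-1$ arcs are fixed, so the arc diagram only changes by sliding the $n$-th arc's left endpoint. By Corollary~\ref{cor:arc-conn}, a word $\underline{a}$ is arc-disconnected exactly when there exists $k$ with $a_k = 2k-1$ and $a_j \ge 2k-1$ for all $j > k$. First I would separate the analysis into two contributions: the ``witness'' $k$ could be $k = n$ itself (which happens precisely when $a_n = 2n-1$, since $a_n \le 2n-1$ always and there are no indices $j > n$), or the witness could be some $k \le n-1$. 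This dichotomy is what produces the threshold $2k-1$ in the statement.

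Next I would pin down the candidate index $k'$ coming from the truncated word. Define $k'$ to be the least index $k' \le n-1$ such that $a_j \ge 2k'-1$ for all $j \in \{k', k'+1, \ldots, n-1\}$, if such an index exists. The key observation is that for a fixed truncation, whether $\underline{a}$ is arc-disconnected via a witness $k \le n-1$ depends only on whether $a_n$ is large enough to not obstruct that witness: the condition ``$a_k = 2k-1$ and $a_j \ge 2k-1$ for all $j > k$'' with $k \le n-1$ holds for $\underline{a}$ iff it holds for $\tau(\underline{a})$ augmented by the requirement $a_n \ge 2k-1$. So if $k'$ exists, then $\underline{a}$ is arc-disconnected iff $a_n \ge 2k'-1$ (the witness $k = k'$ works once $a_n$ clears $2k'-1$; for smaller $a_n$, one must check no other witness $k \le n-1$ works, which follows from minimality of $k'$, and no witness $k = n$ works since that needs $a_n = 2n-1 \ge 2k'-1$ anyway — this edge case needs care). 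If $k'$ does not exist, then no witness $k \le n-1$ is ever available regardless of $a_n$, so the only possible witness is $k = n$, giving arc-disconnected iff $a_n = 2n-1 = 2n-1$; matching the statement's ``$k = n$'' clause, where the threshold $2k-1 = 2n-1$ and ``$a_n \le 2k-2 = 2n-2$'' is precisely the arc-connected range.

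The main obstacle I anticipate is verifying the ``only if'' direction cleanly: namely that when $a_n \le 2k-2$ with $k$ the claimed index, no witness of disconnectedness exists. This requires showing that minimality of $k'$ rules out any witness $k < k'$ (since $a_j \ge 2k-1$ fails somewhere in $\{k, \ldots, n-1\}$ by choice of $k'$), that any witness $k$ with $k' \le k \le n-1$ would force $a_k = 2k-1 \ge 2k'-1$ hence $a_n \ge 2k-1 \ge 2k'-1$, contradicting $a_n \le 2k'-2$, and that the witness $k = n$ needs $a_n = 2n-1$. I would also double-check the boundary behavior when $k = n$ (no constraint on $a_n$ from truncated witnesses, threshold exactly $2n-1$) and confirm consistency with Lemma~\ref{lem:discon} and Corollary~\ref{cor:arc-conn}. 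Once the witness bookkeeping is organized, the rest is routine.
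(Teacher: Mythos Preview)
Your proposal is correct and follows essentially the same route as the paper's proof. The paper also splits into the two cases according to whether the minimal index $k'\le n-1$ exists (equivalently, whether $\tau(\underline{a})$ is arc-disconnected), uses Corollary~\ref{cor:arc-conn} to identify the witness, and in the nontrivial case argues by contradiction from the minimality of $k'$ exactly as you outline; your witness-based phrasing is just a mild repackaging of the same bookkeeping.
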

\begin{proof}
Consider a run in the Gray code for words of length $n$. All the words in
the run have the same truncated word $\tau(\underline{a})=a_1\ldots a_{n-1}$.  

{\noindent \it Case 1}: $\tau(\underline{a})$ is arc-connected.
Applying Corollary~\ref{cor:arc-conn} to $\tau(\underline{a})$ we see
that there is no $k'\leq n-1$ such that 
$a_j\geq 2k'-1$ holds for all $j\in \{k',k'+1, \ldots,n-1\}$. By the
same Corollary~\ref{cor:arc-conn}, $\underline{a}$ is arc-disconnected if
and only if there exists a $k$ such that $a_k=2k-1$ and $a_j \ge
a_k=2k-1$ for all $j>k$.  Since $a_1 \cdots a_{n-1}$ is arc-connected,
such a $k$ must satisfy  $k=n$.  Thus, $a_1 \cdots a_n$ is
arc-disconnected if and only if $a_k=a_n=2n-1$.\\ 

{\noindent\it Case 2}: $\tau(\underline{a})$ is arc-disconnected.
Applying Corollary~\ref{cor:arc-conn} to $\tau(\underline{a})$ yields
that there is a smallest $k'\leq n-1$
such that $a_{k'}=2k'-1$ and $a_j \ge 2k'-1$ for $k'<j<n$.  Clearly, if $a_n \ge
2k'-1$ then $\underline{a}$ is arc-disconnected. We are left to show
that $\underline{a}$ is arc-connected whenever $a_n \le 2k'-2$.
Assume, by way of contradiction, that $\underline{a}$ is
arc-disconnected for some $a_n \le 2k'-2$. By
Corollary~\ref{cor:arc-conn}, there is a $k''$ such that $a_j\geq
2k''-1$ holds for all $j\geq k''$. By the minimality of $k'$ we must
have $k''\geq k'$. On the other hand $a_n\geq 2k''-1$ and $a_n\leq
2k'-2$ imply $k''<k'$, a contradiction.    
\end{proof}

\begin{lemma}
\label{fact8}
Suppose $k$ is defined as in Lemma~\ref{fact8:prelim}. Then 
$\pi(a_1\ldots a_{k-1})$ is the first sign-connected component of the
standard permutation $\pi(a_1\ldots a_{n-1})$.   
\end{lemma}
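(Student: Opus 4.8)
The plan is to unwind what Lemma~\ref{fact8:prelim} tells us about the index $k$ and then read off the claim directly from the arc-diagram picture. Recall that in the word $a_1\ldots a_{n-1}$ encoding the standard permutation $\pi(a_1\ldots a_{n-1})$, each $a_i$ is the position of the left endpoint of the $i$-th arc, so $1\le a_i\le 2i-1$. First I would treat the two cases of Lemma~\ref{fact8:prelim} separately. In Case~1 ($\tau(\underline{a})$ arc-connected), we have $k=n$, so the assertion is simply that $\pi(a_1\ldots a_{n-1})$ is its own first sign-connected component, which is exactly the meaning of being arc-connected (equivalently sign-connected, by Corollary~\ref{cor:arc-conn} together with Lemma~\ref{lem:discon}). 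So Case~1 is immediate.

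The substance is Case~2, where $k=k'$ is the least index $\le n-1$ with $a_j\ge 2k-1$ for all $j\in\{k,k+1,\ldots,n-1\}$. I would argue that this inequality forces the arcs numbered $k,k+1,\ldots,n-1$ to lie entirely among the vertex positions $2k-1,2k,\ldots,2(n-1)$: the left endpoint of arc $i$ sits at $a_i\ge 2k-1$, and its right endpoint sits even further to the right (right endpoints being labeled $-1,-2,\ldots$ left to right in position order, so the right endpoint of arc $i$ is strictly to the right of the right endpoint of arc $i-1$). Conversely, the arcs $1,\ldots,k-1$ occupy exactly the vertex positions $1,2,\ldots,2(k-1)$: they are $k-1$ arcs using $2(k-1)$ endpoints, none of which can be position $2k-1$ or greater since the only arcs with a left endpoint $\ge 2k-1$ are arcs $k,\ldots,n-1$ (using the minimality of $k$ to rule out an earlier arc from also being pushed to the right — in fact every arc $i<k$ has $a_i\le 2i-1<2k-1$ automatically when $i\le k-1$). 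Hence there is a clean vertical cut between positions $2(k-1)$ and $2k-1$ that no arc of $\{1,\ldots,n-1\}$ crosses, and the sub-diagram on positions $1,\ldots,2(k-1)$ is precisely the arc diagram of $\pi(a_1\ldots a_{k-1})$ — one checks that the restriction of the word $a_1\ldots a_{n-1}$ to its first $k-1$ letters is legitimate, i.e. still satisfies $1\le a_i\le 2i-1$, which it does trivially. So $\pi(a_1\ldots a_{k-1})$ is a union of one or more full connected components of $\pi(a_1\ldots a_{n-1})$, sitting leftmost.

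It remains to see that $\pi(a_1\ldots a_{k-1})$ is a \emph{single} connected component, i.e. arc-connected, and that nothing to the right of it starts before position $2k-1$ — but that second point is exactly what we just established, so the components of $\pi(a_1\ldots a_{n-1})$ lying in positions $1,\ldots,2(k-1)$ are the same as the components of $\pi(a_1\ldots a_{k-1})$. Thus ``first sign-connected component of $\pi(a_1\ldots a_{n-1})$'' equals ``first sign-connected component of $\pi(a_1\ldots a_{k-1})$,'' and I must show the latter is all of $\pi(a_1\ldots a_{k-1})$, i.e. $a_1\ldots a_{k-1}$ is arc-connected. By Corollary~\ref{cor:arc-conn}, arc-disconnectedness of $a_1\ldots a_{k-1}$ would give an index $\ell\le k-1$ with $a_\ell=2\ell-1$ and $a_j\ge 2\ell-1$ for all $j\in\{\ell,\ldots,k-1\}$; but combined with $a_j\ge 2k-1\ge 2\ell-1$ for $j\in\{k,\ldots,n-1\}$ this would make $a_j\ge 2\ell-1$ hold for all $j\in\{\ell,\ldots,n-1\}$, contradicting the minimality of $k$ unless $\ell=k$, which is impossible since $\ell\le k-1$. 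Hence $a_1\ldots a_{k-1}$ is arc-connected, completing the proof.

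The main obstacle is the bookkeeping in Case~2: one must be careful that the minimality of $k$ genuinely rules out an arc $i<k$ whose left endpoint is $\ge 2k-1$, so that the left block really does occupy exactly positions $1$ through $2(k-1)$; the cleanest way to phrase this is via Corollary~\ref{cor:arc-conn} applied to both $a_1\ldots a_{k-1}$ and $a_1\ldots a_{n-1}$, playing the minimality of $k$ against any competing index, as in the last paragraph.
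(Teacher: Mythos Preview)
Your proof is correct and follows essentially the same approach as the paper's own proof: split into the two cases of Lemma~\ref{fact8:prelim}, observe that the condition $a_j\ge 2k-1$ for $j\ge k$ pushes arcs $k,\ldots,n-1$ entirely to the right of position $2k-2$, and use the minimality of $k$ (via Corollary~\ref{cor:arc-conn}) to conclude that $a_1\ldots a_{k-1}$ is arc-connected. The paper compresses this last step into a single citation of Corollary~\ref{cor:arc-conn}, whereas you spell out the contradiction with minimality explicitly; your version is more detailed but the argument is the same.
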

\begin{proof}
Assume first there is a least index $k'\leq n-1$ such that $a_j\geq
2k'-1$ holds for all $j\in \{k', k'+1, \ldots,n-1\}$.
The we have $k=k'$ and, by Corollary~\ref{cor:arc-conn}, the
standard permutation $\pi(a_1\cdots a_{k-1})$ is sign-connected.  
Since $a_j\geq 2k-1$ holds for $j=k,\ldots,n-1$, the left endpoints of
the corresponding arcs are to the right of the arcs associated to
$\pi(a_1\cdots a_{k-1})$. Therefore $\pi(a_1\ldots a_{k-1})$ is the
first sign-connected component of $\pi(\tau(\underline{a}))$.  

Assume now that there is no $k'\leq n-1$ satisfying $a_j\geq
2k'-1$ for all $j\in \{k', k'+1, \ldots,n-1\}$. In this case $k=n$ and,
by Corollary~\ref{cor:arc-conn}, $\pi(\tau(\underline{a}))$ is
arc-connected. 
\end{proof}

\begin{lemma}
\label{fact5}
Suppose $\underline{a}$ is immediately followed by
$\underline{b}$ in the full Gray code for words and let $k$ be the
unique index such that $b_k\neq a_k$. 
If $b_k=a_k+1$ and $\underline{a}$ is arc-disconnected,  
then $\underline{b}$ is arc-disconnected; and 
if $b_k=a_k-1$ and $\underline{a}$ is arc-connected,  
then $\underline{b}$ is arc-connected.
\end{lemma}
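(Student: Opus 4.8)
The goal is to show that arc-connectedness (resp. arc-disconnectedness) is preserved along each step of the Gray code, provided the step increments a letter when starting from an arc-disconnected word, or decrements a letter when starting from an arc-connected word. The natural approach is to invoke Lemma~\ref{fact6}, which tells us the structure of the unique changed index $k$: either $k=n$, or $k<n$ and all later letters are pinned at their extreme values ($a_j=1$ for all $j>k$ or $a_j=2j-1$ for all $j>k$). I would split into these two cases, and in the second case further into the two sub-cases corresponding to the two possible extreme patterns. The key tool throughout is Corollary~\ref{cor:arc-conn}: a word is arc-disconnected iff there exists an index $\ell$ with $a_\ell=2\ell-1$ and $a_j\ge 2\ell-1$ for all $j>\ell$.

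\textbf{Case $k=n$.} Here only the last letter changes, $b_n=a_n\pm1$, and $b_j=a_j$ for $j<n$. If $\underline{a}$ is arc-disconnected with witness index $\ell$: if $\ell<n$ then the witness condition only involves $a_1,\dots,a_{n-1}$ and the inequality $a_j\ge 2\ell-1$ for $j>\ell$; but increasing $a_n$ only helps, so if $b_n=a_n+1$ the same $\ell$ still witnesses disconnectedness of $\underline{b}$. If $\ell=n$, then $a_n=2n-1$ is already maximal, so $b_n=a_n+1$ is impossible — so this situation does not arise. For the decreasing half: if $\underline{a}$ is arc-connected and $b_n=a_n-1$, I want $\underline{b}$ arc-connected. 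Suppose not; then $\underline{b}$ has a witness $\ell$. If $\ell<n$, this witness only constrains indices $<n$ plus the inequalities $b_j=a_j\ge 2\ell-1$ for $j>\ell$, which would already have witnessed disconnectedness of $\underline{a}$ — contradiction. If $\ell=n$, then $b_n=2n-1$, i.e. $a_n=2n$, impossible since $a_n\le 2n-1$. So $\underline{b}$ is arc-connected.

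\textbf{Case $k<n$.} By Lemma~\ref{fact6}, either $a_j=1$ for all $j>k$ or $a_j=2j-1$ for all $j>k$; and $b_j=a_j$ for $j>k$ as well, so $\underline b$ shares the same pinned tail. I would handle the pattern $a_j=2j-1$ for $j>k$ and $b_k=a_k+1$ (arc-disconnected $\underline a$): since $a_{k+1}=2(k+1)-1$, $a_{k+2}=2(k+2)-1,\dots$, the word $\tau$ has $a_j=2j-1$ for all $j\ge k+1$, so by Corollary~\ref{cor:arc-conn} with $\ell=k+1$ the word $\underline a$ is automatically arc-disconnected regardless of $a_k$; incrementing $a_k$ changes nothing, and $\underline b$ also has $b_{k+1}=2(k+1)-1$ with the same tail, so $\underline b$ is arc-disconnected by the same $\ell=k+1$. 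For the pattern $a_j=1$ for $j>k$ with $\underline a$ arc-connected and $b_k=a_k-1$: here the pinned tail makes arc-disconnectedness depend only on $a_1\dots a_k$ — indeed any witness $\ell$ would need $a_j\ge 2\ell-1$ for $j>\ell$, and since $a_{k+1}=1$, we'd need $\ell\ge k+1$ forcing $1=a_\ell=2\ell-1$, i.e. $\ell=1$, impossible for $k\ge 1$ — wait, $\ell\ge k+1\ge 2$ contradicts $a_\ell=2\ell-1\ge 3 > 1$. So with this tail, no witness with $\ell>k$ exists, and any witness has $\ell\le k$, depending only on $a_1,\dots,a_k$ (using $a_j\ge 2\ell-1$ for $\ell<j\le k$, automatically true for the pinned $j>k$ since $2\ell-1\le 2k-1$ and... actually need care: for $j>k$, $a_j=1$, and we'd need $1\ge 2\ell-1$, i.e. $\ell\le 1$). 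So in fact if $a_{k+1}=\dots=1$, the word is arc-disconnected iff $\ell\in\{1,\dots,?\}$ — $\ell=1$ needs $a_1=1$ and $a_j\ge1$ always, so $\underline a$ arc-connected forces $a_1\ge2$; more robustly, the reduction is to Lemma~\ref{fact8} / Corollary~\ref{cor:arc-conn} applied to the first $k$ letters together with the decrement step on the Gray code for words of length $k$.

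\textbf{Main obstacle.} The delicate point is the second sub-case: when the tail is all $1$'s, arc-disconnectedness of $\underline a$ reduces not simply to the prefix $a_1\dots a_k$ but requires care because a witness $\ell$ could in principle straddle the prefix and the tail. The clean resolution is to observe, via Corollary~\ref{cor:arc-conn}, that when $a_{k+1}=1$ no witness $\ell>k$ can exist, so arc-disconnectedness is entirely determined by $a_1,\dots,a_{k-1}$ (the letter $a_k$ being changed, and $a_k,b_k \le 2k-1$ so $\ell=k$ is governed by whether $a_k$ or $b_k$ equals $2k-1$); then I would apply Corollary~\ref{parity} to pin down the parity of $a_k$ and rule out $a_k=2k-1$ in the relevant direction, or else recurse via the induction implicit in Lemma~\ref{fact6}'s reduction to $k=n-1$. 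I expect the bookkeeping of which index plays the role of the Corollary~\ref{cor:arc-conn} witness, and confirming it survives the single-letter change in the correct direction, to be the crux; everything else is routine application of the earlier corollaries.
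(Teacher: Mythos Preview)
Your case $k=n$ is fine (it re-proves the monotonicity contained in Lemma~\ref{fact8:prelim} directly from Corollary~\ref{cor:arc-conn}), and your sub-case ``tail $=2j-1$'' is also fine: the witness $\ell=k+1$ shows both $\underline{a}$ and $\underline{b}$ are arc-disconnected, so the implication is immediate. The problem is the ``tail all $1$'s'' sub-case, where you go in circles and never land. You actually prove what you need and then retract it: if $a_{k+1}=1$ then any witness $\ell\ge 2$ would require $a_{k+1}\ge 2\ell-1\ge 3$, which fails; and $\ell>k$ forces $a_\ell=1=2\ell-1$, also impossible. Hence \emph{no} witness exists, so $\underline{a}$ (and likewise $\underline{b}$, which shares the tail) is arc-connected, full stop. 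There is nothing to ``reduce to the prefix,'' no need for Corollary~\ref{parity}, and no recursion; the hypothesis ``$\underline{a}$ arc-disconnected'' in part~(A) is vacuous here, and part~(B) holds because $\underline{b}$ is automatically arc-connected. Your proposed machinery for this sub-case is aimed at a difficulty that does not exist.

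The paper's proof sidesteps all of this by using Lemma~\ref{fact8:prelim} rather than Corollary~\ref{cor:arc-conn}. For $k=n$ the threshold statement of Lemma~\ref{fact8:prelim} gives monotonicity in $a_n$ in one line. For $k<n$, Lemma~\ref{fact6} forces $a_n\in\{1,2n-1\}$, and Lemma~\ref{fact8:prelim} then says $\underline{a}$ is arc-connected precisely when $a_n=1$ (equivalently, tail all $1$'s) and arc-disconnected precisely when $a_n=2n-1$ (tail all $2j-1$'s). Since $\underline{b}$ has the same tail, it has the same connectedness status, and both implications follow at once. Your direct witness-chasing works, but the threshold lemma packages the argument much more cleanly.
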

\begin{proof}
Consider first the case when $k=n$. In this case $\underline{a}$ and
$\underline{b}$ belong to the same run and the statement follows from
Lemma~\ref{fact8:prelim}. We are left to consider the case when $k \neq
n$. In this case Lemma~\ref{fact6} tells us that either $a_i=1$ holds
for all $i > k$  or $a_i=2i-1$ holds for all $i > k$. By
Lemma~\ref{fact8:prelim}, $\underline{a}$ is arc-connected exactly when
$a_i=1$ holds for all $i>k$ and $\underline{a}$ is arc-disconnected exactly when
$a_i=2i-1$ holds for all $i>k$. The same characterization also applies
to $b$ since $a_i=b_i$ for all $i>k$.
\end{proof}

\section{Restricting the Gray Code to the Shelling Types of the $n$-Cube}
\label{sec:conn}

Our goal is to define a Gray code for the facet enumerations of the
boundary of the $n$-cube, restricted to shelling types.  This is
equivalent to finding a Gray code for the arc-connected words $a_1
\ldots  a_n$ since arc-connected words encode sign-connected standard
permutations, and we know the sign-connected standard permutations
represent shellings of the boundary of the $n$-cube.   

In this section, we will show that the sublist obtained by removing all
arc-disconnected words from the full Gray code of words from
Section~\ref{sec:gray} yields a Gray code for the sign-connected standard
permutations. This sublist of the standard permutations will be referred
to as the connected Gray code. Our main result is the following. 

\begin{theorem}
\label{prop1}
Suppose $\underline{a}$ and $\underline{b}$ are arc-connected, but every
code listed between these two words in the full Gray code of words is
arc-disconnected.  Then $\pi(\underline{a})$ and $\pi(\underline{b})$
differ by an adjacent transposition. 
\end{theorem}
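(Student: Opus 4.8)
The plan is to analyze the structure of a maximal block of consecutive arc-disconnected words sitting between two arc-connected words $\underline{a}$ and $\underline{b}$ in the full Gray code. Let me call this block $\underline{a}, \underline{c}^{(1)}, \ldots, \underline{c}^{(r)}, \underline{b}$, so that $\underline{a}$ and $\underline{b}$ are arc-connected and each $\underline{c}^{(s)}$ is arc-disconnected. First I would observe, using Lemma~\ref{fact5}, that the transition $\underline{a} \to \underline{c}^{(1)}$ must occur at some index $k$ with $b_k = a_k + 1$ (increasing the letter at position $k$), because if instead we decreased a letter from an arc-connected word we would stay arc-connected; and symmetrically the final transition $\underline{c}^{(r)} \to \underline{b}$ must be a decrease. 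Next, Lemma~\ref{fact8:prelim} tells us that within each run the arc-connected words form an initial segment ($a_n \le 2k-2$) and the arc-disconnected words form a final segment ($a_n \ge 2k-1$); this forces the block of arc-disconnected words to span the ``top ends'' of several consecutive runs. The key reduction is: by Lemma~\ref{fact6}, whenever a transition happens at an index $i < n$, all later letters are frozen at either $1$ or at their maximal value $2j-1$; combining this with Lemma~\ref{fact8:prelim} and Lemma~\ref{fact8}, the first sign-connected component of the truncated permutation is controlled by the index $k$ of Lemma~\ref{fact8:prelim}.

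The heart of the argument is to show that the entire block of arc-disconnected words $\underline{c}^{(1)},\ldots,\underline{c}^{(r)}$ differs from $\underline{a}$ (and from $\underline{b}$) only in a tail of positions where the letters toggle between the two ``extreme'' patterns $1,1,\ldots,1$ and $(2k+1),(2k+3),\ldots,(2n-1)$, with the first $k-1$ letters $a_1\cdots a_{k-1}$ held fixed throughout — here $k$ is the index supplied by Lemma~\ref{fact8} applied to the relevant runs. Concretely: when we increase $a_k$ to $a_k+1$ to leave the arc-connected region, Lemma~\ref{fact6} forces $a_j = 2j-1$ for all $j > k$ (we are climbing ``up'' through runs), so $\underline{a} = a_1\cdots a_k (2k+1)(2k+3)\cdots(2n-1)$; then the Gray code runs through the words obtained by varying only $a_k,\ldots,a_n$ among the disconnected configurations, and exits when it reaches $\underline{b} = a_1\cdots a_k' (2k+1)\cdots (2n-1)$ with $a_k' = a_k + 1$ back in the arc-connected region (or the mirror-image situation with $1$'s). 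The upshot is that $\underline{a}$ and $\underline{b}$ share the first $k-1$ letters and the last $n-k$ letters, and differ only at position $k$ by $\pm 1$, i.e.\ $b_k = a_k \pm 1$.

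Finally, once we know $\underline{a}$ and $\underline{b}$ differ only at a single position $k$ by $1$, with $a_j = b_j = 2j-1$ for all $j > k$ (the minimal-arc / ``all extreme'' tail, as guaranteed by Lemma~\ref{fact6}), I would translate this back to arc diagrams: the arcs $k+1,\ldots,n$ are all minimal arcs stacked at the right end, unaffected; arcs $1,\ldots,k-1$ are stationary; and arc $k$ slides by exactly one vertex position. As noted at the end of Section~\ref{sec:arc}, moving one arc by one vertex position swaps two adjacent arc-endpoints, which is precisely an adjacent transposition on the associated standard permutation. (In the mirror case $a_j = b_j = 1$ for $j > k$, the same conclusion holds: arc $k$ slides by one and all higher arcs stretch over the first $k-1$ arcs without moving relative to the sliding endpoint.) This completes the proof. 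I expect the main obstacle to be the bookkeeping in the middle paragraph — verifying rigorously that the block of arc-disconnected words really is confined to the ``extreme tail'' configurations and that no intermediate transition at a position $< k$ can sneak in; this is exactly where Lemma~\ref{fact6}, Lemma~\ref{fact8:prelim}, Lemma~\ref{fact8}, and Lemma~\ref{fact5} must be assembled carefully.
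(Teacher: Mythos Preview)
Your structural picture of the disconnected block is wrong in a way that breaks the argument, not just the bookkeeping. You write that ``when we increase $a_k$ to $a_k+1$ to leave the arc-connected region, Lemma~\ref{fact6} forces $a_j = 2j-1$ for all $j > k$, so $\underline{a} = a_1\cdots a_k (2k+1)(2k+3)\cdots(2n-1)$.'' But any word with $a_n = 2n-1$ is arc-disconnected, so this cannot be $\underline{a}$. In fact the first step out of $\underline{a}$ does \emph{not} change some $a_k$ with $k<n$: by Lemma~\ref{fact8:prelim} the word $\underline{a}$ cannot sit at the end of a run (both run-endpoints have $a_n\in\{1,2n-1\}$, and $a_n=2n-1$ is disconnected while $a_n=1$ would force the next word to also have last letter $1$, hence be arc-connected). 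So the step $\underline{a}\to\underline{c}^{(1)}$ increments $a_n$, and the disconnected block first runs $a_n$ up to $2n-1$. The next run then decreases its last letter from $2n-1$ down to $1$; since any word with last letter $1$ is arc-connected, $\underline{b}$ must lie in this very next run. The real reduction is therefore that $\tau(\underline{a})$ and $\tau(\underline{b})$ are \emph{consecutive} in the length-$(n-1)$ Gray code, and the substantive work is to prove $a_n=b_n$ (an even value $2j-2$, not the extreme $2n-1$) via a case analysis on the form of $\tau(\underline{a})$ given by Lemma~\ref{fact6} and Lemma~\ref{fact8}. Your proposal never establishes $a_n=b_n$, and your asserted tail $a_j=2j-1$ for all $j>k$ is false precisely at $j=n$.

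This error propagates into your final step. You claim ``the arcs $k+1,\ldots,n$ are all minimal arcs stacked at the right end,'' but since $a_n\neq 2n-1$ the $n$th arc is never minimal; in the paper's Case~1 it overlaps only arc $n-1$, and in Case~2 it stretches back over a string of minimal arcs to meet the first connected component of $\pi(\tau(\underline{a}))$. More fundamentally, recall from Section~\ref{sec:arc} that two words differing by $\pm 1$ in a single position need \emph{not} encode permutations differing by an adjacent transposition (the paper's example is $112$ versus $122$). So even granting that $\underline{a}$ and $\underline{b}$ differ only at position $k$, you cannot conclude the theorem without analyzing how the specific, non-extreme value of $a_n=b_n$ positions the $n$th arc relative to the moving endpoint. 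The paper handles this with a genuine case split (Cases~1, 2a, 2b, 3) that your outline does not supply.
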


\begin{proof}
Without loss of generality we may assume that $\underline{b}$ follows
$\underline{a}$ in the full Gray code of words. 
By Lemma~\ref{fact8:prelim}, $\underline{a}$ cannot be at the end of a
run and, Lemma~\ref{fact5}, $\underline{a}$ must be in an increasing
run. So $a_1\ldots a_{n-1}a_n$ is arc-connected, and the next consecutive
word in the Gray code of words is $a_1\ldots a_{n-1}a'_n$, where
$a'_n=a_n+1$.  This word is disconnected as are all the remaining codes
in the run up to and including $a_1\ldots a_{n-1}(2n-1)$.   

The next run in the Gray code of words starts with $c_1\ldots
c_{n-1}(2n-1)$ which is arc-disconnected.  This run decreases down to
$c_1\ldots c_{n-1}1$, an arc-connected code.  We must have $c_1\ldots
c_{n-1}=b_1\ldots b_{n-1}$ since $b_1\ldots b_n$ is the first
arc-connected code following $\underline{a}$.  Thus, the next run in the
Gray code of words actually starts with $b_1\ldots b_{n-1}(2n-1)$, and
the subsequent codes in the run down to $b_1\ldots b_{n-1}(b_n+1)$ are
arc-disconnected. 

By the construction of the Gray code, $\tau(\underline{a})$ and
$\tau(\underline{b})$ are consecutive words in the full Gray code for
words of length $n-1$; thus, there is exactly one $k<n$ such that
$b_k=a_k \pm 1$.  By Lemma~\ref{fact6}, either we have $k=n-1$ or we have
$k<n-1$ and $\tau(\underline{a})$ is either of the form 
$\tau(\underline{a})=a_1\ldots a_k 1 \ldots  1$, or of the form 
$\tau(\underline{a})=a_1\ldots a_k (2k+1) \ldots  (2n-3)$.

{\noindent\it Case 1:} $\tau(\underline{a})=a_1\ldots a_k 1 \ldots  1$
where $a_k\neq1$. By our assumption, $b_k=a_k \pm 1$.  Thus, both
$\tau(\underline{a})$ and $\tau(\underline{b})$ are arc-connected; by
Lemma~\ref{fact8}, we must have $a_n=2n-2=b_n$.  In the arc diagram,
$a_j=b_j=1$ for $k < j < n$ means the first $k$ arcs are under $n-k-1$
nested arcs.  An arc stretching over all previous arcs will not change
how any two ends of the previous arcs are interchanged since that move
occurs completely under the arcs.  The $n^{th}$ arc ($a_n=2n-2$) will
only overlap the $(n-1)^{st}$ arc, meaning this last arc will not affect
how the first $k$ arcs change when we make the move associated to
changing from $\underline{a}$ to $\underline{b}$. 
By the recursive definition of the full Gray code, we know that
$\pi(a_1\ldots a_k)$ and $\pi(b_1\ldots b_k)$ differ by an adjacent
transposition.  This means the arc diagrams encoded by $a_1\ldots a_k$
and $b_1\ldots b_k$ differ by exactly two adjacent ends of two distinct
arcs swapping positions; and since we already know that the $(k+1)^{st}$
through $n^{th}$ arcs will not affect that swap, we get that
$\underline{a}=a_1\ldots a_k 1 \ldots  1 (2n-2)$ and
$\underline{b}=b_1\ldots b_k 1 \ldots  1 (2n-2)$ encode two standard
permutations which differ by an adjacent transposition.  

{\noindent\it Case 2:} $\tau(\underline{a})=a_1\ldots a_k (2k+1) \ldots
(2n-3)$ where $a_k \neq 2k-1$. By Corollary~\ref{cor:arc-conn},
$\tau(\underline{a})$ is arc-disconnected, so there must exist a $j \le
k+1$ such that $a_j=2j-1$ and $a_i \ge 2j-1$ for $j<i<n$.  Let us choose
$j$ to be the smallest index with this property. By
Lemma~\ref{fact8:prelim}, the word
$a_1\ldots a_k(2k+1)\ldots (2n-3)(2j-2)$ is arc-connected, but the rest
of the codes in the run, $a_1\ldots a_k(2k+1)\ldots (2n-3)(2j-1)$
through $a_1\ldots a_k(2k+1)\ldots (2n-3)(2n-1)$, are arc-disconnected.
Thus, we must have $a_n=2j-2$.    

Since $\underline{b}$ is in a decreasing run 
immediately following the run in which $\underline{a}$ is contained, the
run of $\underline{b}$ starts with $b_1\ldots b_k(2k+1)\ldots
(2n-3)(2n-1)$ where $b_k=a_k \pm 1$.  By Corollary~\ref{parity}, if
$b_k=a_k+1$, then $a_k$ is odd, and if $b_k=a_k-1$, then $a_k$ is even.
Since $j \neq k$ and $j\leq k+1$, we have two subcases:   

{\noindent\it Case 2a:} $j=k+1$. In this case, $a_1\ldots a_k$ is
arc-connected and $a_n=2j-2=2k$. We claim that $b_1\ldots b_k$ is also
arc-connected. This is an immediate consequence of Lemma~\ref{fact5}
when $b_k=a_k-1$. If $b_k=a_k+1$, then $b_k$ is even and $b_1\ldots b_k$
is arc-connected because, by Lemma~\ref{fact8:prelim}, the first
arc-disconnected code in an increasing run ends with an odd letter. 
Thus, both $\underline{a}=a_1\ldots a_k (2k+1) \ldots  (2n-3) (2k)$ and
$b_1\ldots b_k (2k+1) \ldots  (2n-3) (2k)$ are
arc-connected codes, and there are only arc-disconnected codes between
these two words in the full Gray code of words. Therefore $b_n=2k$.

In the arc diagram of $\pi(\underline{a})=\pi(a_1\ldots a_k (2k+1)
\ldots  (2n-3)(2k))$, the arcs of $\pi(a_1\ldots a_k)$ form the first
connected component of $\pi(\tau(\underline{a}))$, which is followed by
$n-k-1$ minimal arcs (see Section~\ref{sec:arc} for a definition).  Then
the $n^{th}$ arc stretches over the minimal arcs to intersect only the
$k^{th}$ arc of the first connected component of
$\pi(\tau(\underline{a}))$.  Thus, the $(k+1)^{st}$ through $n^{th}$ 
arcs will not affect any moves among the first $k$ arcs. 
Since $\pi(a_1\ldots a_k)$ and $\pi(b_1\ldots b_k)$ must differ by an
adjacent transposition, the recursive construction of the full Gray code
guarantees that $\pi(\underline{a})$ and $\pi(\underline{b})$ will also
differ by an adjacent transposition. 

{\noindent\it Case 2b:} $j < k$. 
In this case, $a_1\ldots a_{j-1}$ is arc-connected and $2j-1 \le
a_k<2k-1$.  By Corollary~\ref{cor:arc-conn}, we have that $a_1\ldots
a_k$ is arc-disconnected and, by Lemma~\ref{fact8}, the first arc-connected
component of $\pi(a_1\ldots a_k)$ is $\pi(a_1\ldots a_{j-1})$. 
We claim that $b_1\ldots b_k$ is also arc-disconnected. This is an
immediate consequence of Lemma~\ref{fact5} when $b_k=a_k+1$.
If $b_k=a_k-1$ then $a_k$ is even and $b_1\ldots b_k$
is arc-disconnected because $a_k$ is strictly greater than
$2j-1$, implying $b_k \ge 2j-1$.  

Since $b_1\ldots b_k$ is arc-disconnected and the same holds for
$b_1\ldots b_m=a_1\ldots a_m$ for any $m$ strictly between $j$ and $k$,   
$\pi(b_1\ldots b_{j-1})$ is the first connected component of
$\pi(b_1\ldots b_k)$.  
Hence, both $\underline{a}=a_1\ldots a_j\ldots a_k (2k+1) \ldots  (2n-3)
(2j-2)$ and $b_1\ldots b_j\ldots b_k (2k+1) \ldots  (2n-3)
(2j-2)$ are arc-connected codes such that there are only
arc-disconnected words between them in the full Gray code of words. Thus
we have $b_n=2j-2$.

In the arc diagram of $\pi(\underline{a})=\pi(a_1\ldots a_k (2k+1)
\ldots  (2n-3)(2j-2))$, the arcs of $\pi(a_1\ldots a_{j-1})$ form the
first connected component of $\pi(\tau(\underline{a}))$; the 
$(k+1)^{st}$ through $(n-1)^{st}$ arcs are minimal arcs located at the
right end of the diagram; and the $n^{th}$ arc stretches over the second
through last connected components of the arc diagram, intersecting only
the $(j-1)^{st}$ arc of the first connected component.  Thus, the
$(k+1)^{st}$ through $n^{th}$ arcs will not affect any changes occurring
in the first $k$ arcs.  The recursive construction of the full
Gray code ensures that $\pi(\underline{a})$ and $\pi(\underline{b})$
will differ by an adjacent transposition since $\pi(a_1\ldots a_k)$ and
$\pi(b_1\ldots b_k)$ differ by an adjacent transposition.  

{\noindent\it Case 3:} $k=n-1$ (namely, $b_{n-1}=a_{n-1} \pm 1$). 
By Lemma~\ref{fact8:prelim}, there exists a $j$ such that $a_1\ldots a_{n-1}(2j-2)$ is arc-connected but that $a_1\ldots a_{n-1}(2j-1)$ is arc-disconnected.  
If $\tau(\underline{a})=a_1\ldots a_{n-1}$ and
$\tau(\underline{b})=b_1\ldots b_{n-1}$ are both arc-connected, this is
a degenerate case of case $2a$, where the number of minimal arcs to the
right of the first connected component of the arc diagram of
$\pi(\tau(\underline{a}))$ zero (since $\tau(\underline{a})$
is arc-connected). This does not change the conclusion of the argument.
Similarly, if $\tau(\underline{a})=a_1\ldots a_{n-1}$ and
$\tau(\underline{b})=b_1\ldots b_{n-1}$ are both arc-disconnected, we
have a degenerate case of case $2b$. We are left to consider the case
when $\tau(\underline{a})$ is 
arc-connected but $\tau(\underline{b})$ is arc-disconnected, and the
case when $\tau(\underline{a})$ is
arc-disconnected but $\tau(\underline{b})$ is arc-connected. 
We will show by way of contradiction that neither of these
cases can occur.

Assume first that $\tau(\underline{a})$ is
arc-connected, but is immediately 
followed by the arc-disconnected word $\tau(\underline{b})$ in the Gray
code for words of length $n-1$.  Then by Lemma~\ref{fact8:prelim},
$\tau(\underline{a})=a_1\ldots a_{n-2}(2j-2)$ for some $j$.  Since
$\underline{a}$ is in an increasing run, Corollary~\ref{run-parity}
gives $a_1+\ldots +a_{n-2}+(2j-2)+(n-2) \equiv 1$ mod 2.  Thus,
$a_1+\ldots +a_{n-2}+(n-3) \equiv 0$ mod 2, so $\tau(\underline{a})$ is
in a decreasing run in the Gray code of length $n-1$.  Hence,
$b_{n-1}=a_{n-1}-1=2j-3$.  By Lemma~\ref{fact5}, we get that
$\tau(\underline{b})$ is arc-connected, a contradiction.  

Assume finally that $\tau(\underline{a})$ is arc-disconnected, but is
immediately followed by the arc-connected word in $\tau(\underline{b})$
the Gray code for words of length $n-1$.  Then by
Lemma~\ref{fact8:prelim}, $\tau(\underline{a})=a_1\ldots a_{n-2}(2j-1)$
for some $j$.  Since $\underline{a}$ is in an increasing run,
Corollary~\ref{run-parity} gives $a_1+\ldots +a_{n-2}+(2j-1)+(n-2)
\equiv 1$ mod 2.  Thus, $a_1+\ldots +a_{n-2}+(n-3) \equiv 1$ mod 2, so
$\tau(\underline{a})$ is in an increasing run in the Gray code for words
of length $n-1$.  Hence, $b_{n-1}=a_{n-1}+1=2j$.  By Lemma~\ref{fact5}
we get that $\tau(\underline{b})$ is arc-disconnected, a contradiction. 
\end{proof}

In the proof of Theorem~\ref{prop1} we have also shown the following
statement. 
\begin{proposition}
\label{prop2}
Suppose $\underline{a}$ and $\underline{b}$ as in Theorem~\ref{prop1}.  Then
$a_n=b_n=2i-2$ for some $i\in \{2, \ldots, n\}$, satisfying $a_i=b_i=2i-1$ and
$a_j,b_j \ge 2i-1$ for all $j\in \{i,\ldots, n-1\}$. 
\end{proposition}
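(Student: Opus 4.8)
The plan is to extract the statement directly from the case analysis already carried out in the proof of Theorem~\ref{prop1}, branch by branch, rather than argue anew. Recall that in that proof we assume $\underline{b}$ follows $\underline{a}$, observe via Lemmas~\ref{fact8:prelim} and~\ref{fact5} that $\underline{a}$ lies in an increasing run and $\underline{b}$ in the immediately following decreasing run, and let $k<n$ be the unique index with $b_k=a_k\pm 1$, so that $\tau(\underline{a})$ and $\tau(\underline{b})$ are consecutive in the Gray code for words of length $n-1$. By Lemma~\ref{fact6}, either $k=n-1$ (Case~3), or $k<n-1$ with $\tau(\underline{a})=a_1\ldots a_k 1\ldots 1$ (Case~1), or $\tau(\underline{a})=a_1\ldots a_k(2k+1)\ldots(2n-3)$ (Case~2, split into $j=k+1$ and $j<k$). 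In each branch the proof of Theorem~\ref{prop1} pins down $a_n$, $b_n$ and the shape of the tail of $\underline{a}$, and I would read the promised index $i$ off that information.

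Concretely: in Case~1, and in the subcase of Case~3 that degenerates to Case~2a (both $\tau(\underline{a})$ and $\tau(\underline{b})$ arc-connected), the proof gives $a_n=b_n=2n-2$, so I would take $i=n$; then $\{i,\ldots,n-1\}=\emptyset$ and the requirements $a_i=b_i=2i-1$ and $a_j,b_j\ge 2i-1$ are vacuous. In Case~2a I would take $i=k+1$: the proof gives $a_n=b_n=2k=2i-2$, the shape of $\tau(\underline{a})$ gives $a_m=2m-1$ for all $m\in\{i,\ldots,n-1\}$, hence $a_i=2i-1$ and $a_m\ge 2i-1$ there, and since the differing index $k$ is strictly below $i$, the word $\underline{b}$ agrees with $\underline{a}$ throughout $\{i,\ldots,n-1\}$. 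In Case~2b, and in the subcase of Case~3 that degenerates to Case~2b (both $\tau(\underline{a})$ and $\tau(\underline{b})$ arc-disconnected), I would take $i=j$, the least index with $a_j=2j-1$ and $a_m\ge 2j-1$ for $j<m<n$; this index is $\ge 2$ because $a_1=1$ never witnesses arc-disconnectedness, the proof gives $a_n=b_n=2j-2=2i-2$, and the statement that $a_m\ge 2i-1$ for $m\in\{i,\ldots,n-1\}$ with equality at $m=i$ is precisely the defining property of $j$.

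The only point that needs slightly more than transcription is that in Case~2b (and its degenerate Case~3 analogue) the differing index $k$ lies inside $\{i,\ldots,n-1\}$, so one must still check $b_i=2i-1$ and $b_m\ge 2i-1$ there. The equality $b_i=a_i=2i-1$ is immediate since $i=j<k$. For the inequality at $m=k$: if $b_k=a_k+1$ it is clear, and if $b_k=a_k-1$ then the parity bookkeeping already used in the proof of Theorem~\ref{prop1} (Corollaries~\ref{parity} and~\ref{run-parity}) forces $a_k$ to be even, while the Case~2b hypotheses give $a_k>2j-1$ strictly; an even integer exceeding $2j-1$ is at least $2j$, so $b_k\ge 2j-1=2i-1$. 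The two genuinely impossible subcases of Case~3 were already shown in the proof of Theorem~\ref{prop1} not to occur, so they contribute nothing. The main obstacle is purely organizational: matching the correct $i$ to each branch and keeping the degenerate $i=n$ branch (where the side conditions are empty) straight; no idea beyond those already in the proof of Theorem~\ref{prop1} is required.
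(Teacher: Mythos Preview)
Your proposal is correct and follows exactly the paper's approach: the paper simply states that Proposition~\ref{prop2} was ``also shown'' in the proof of Theorem~\ref{prop1}, and you have made that extraction explicit, case by case, reading off the correct value of $i$ in each branch. Two small remarks: your claim that the Case~2b hypotheses give $a_k>2j-1$ \emph{strictly} is not quite how it comes out---the hypotheses only give $a_k\ge 2j-1$, and it is the parity ($a_k$ even, $2j-1$ odd) that upgrades this to a strict inequality; this is precisely the argument the paper uses in Case~2b. Also, your reading that the side conditions $a_i=b_i=2i-1$ and $a_j,b_j\ge 2i-1$ become vacuous when $i=n$ is the intended one (the conditions concern indices in $\{i,\ldots,n-1\}$), even if the proposition's phrasing is slightly loose on this point.
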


If we look at the list of words encoding the standard permutations of
the connected Gray code, then each run starts with $a_n=1$ and ends with
$a_n=2i-2$ for some $i$ (or vice versa).  The proof of
Theorem~\ref{prop1} allows us to describe the relationship between the
any arc-connected $\underline{a}$ and the arc-connected $\underline{b}$
immediately following it in the connected Gray code. We see that in the
case when $\underline{a}$ and $\underline{b}$ are in different runs of
the full Gray code, $\underline{a}$ and $\underline{b}$ have the
following properties:
\begin{enumerate}
\item $\pi(\underline{a})$ and $\pi(\underline{b})$ differ by an adjacent transposition,
\item $a_n=b_n$,
\item if $a_n=b_n=2i-1$ for some $i>1$ then $\tau(\underline{a})$ and $\tau(\underline{b})$ are either both connected or both disconnected,
\item if $a_n=b_n=2i-1$ for some $i>1$, $\tau(\underline{a})$ and
  $\tau(\underline{b})$ are both arc-disconnected and $\pi(a_1\ldots a_k)$
  is the first connected component of $\pi(\tau(\underline{a}))$, then
  the first connected component of $\pi(\tau(\underline{b}))$ is
  $\pi(b_1\ldots b_k)$.  
\end{enumerate}

As an immediate consequence of Theorem~\ref{prop1}, the sublist of the
full Gray code obtained by simply removing all of the sign-disconnected
standard permutations is also a Gray code.  Hence, by working with the
words which encode standard permutations, we have found a Gray code,
namely the connected Gray code, for the standard permutations which
represent the shelling types of the of the facets of the boundary of the
$n$-cube.


\end{document}